\newtheorem{theorem}{\bf Theorem}
\newtheorem{lemma}{\bf Lemma}
\newtheorem{definition}{\bf Definition}
\newtheorem{assumption}{\bf Assumption}
\newenvironment{proof}{{\it \bf Proof:}}{\hfill $\blacksquare$\par}
\newtheorem{Remark}{\bf Remark}
\begin{document}

\title{Evolutionary dynamics under periodic switching of update rules on regular networks}

\author{Shengxian~Wang,~Weijia~Yao,~Ming~Cao, and~Xiaojie~Chen
\thanks{This research was supported by the National Natural Science Foundation of China (Grant Nos. 61976048 and 62036002) and the Fundamental Research Funds of the Central Universities of China. S.W. acknowledges the support from China Scholarship Council (Grant No. 202006070122). (Corresponding authors:  Ming Cao and Xiaojie Chen).}
\thanks{S. Wang is with School of Mathematical Sciences, University of Electronic Science and Technology of China, Chengdu 611731, China,  and  also with ENTEG, Faculty of Science and Engineering, University of Groningen, Groningen 9747 AG, The Netherlands (e-mail: shengxian.wang.yy@outlook.com).}
\thanks{W. Yao is with School of Robotics, Hunan University, Hunan, China, and also with ENTEG, Faculty of Science and Engineering, University of Groningen, Groningen 9747 AG, The Netherlands (e-mail: weijia.yao.new@outlook.com).}
\thanks{M. Cao is with ENTEG, Faculty of Science and Engineering, University of Groningen, Groningen 9747 AG, The Netherlands (e-mail: m.cao@rug.nl).}
\thanks{X. Chen is with School of Mathematical Sciences, University of Electronic Science and Technology of China, Chengdu 611731, China (e-mail: xiaojiechen@uestc.edu.cn).}}
% how  affects the evolutionary dynamics of cooperation, and under what theoretical  conditions cooperation can emerge.
% how  affects the evolutionary dynamics of cooperation, and under what theoretical  conditions cooperation can emerge.

\maketitle

\begin{abstract}
Microscopic strategy update rules play an important role in the evolutionary dynamics of cooperation among interacting agents on complex networks.
Many previous related works  only consider one \emph{fixed} rule, while in the real world, individuals may switch, sometimes periodically, between rules. It is of particular theoretical interest to investigate under what conditions the periodic switching of strategy update rules facilitates the emergence of cooperation.
To answer this question, we study the evolutionary prisoner's dilemma game on regular networks where agents can periodically switch their strategy update rules. We accordingly develop a theoretical framework of this periodically switched system, where the replicator equation corresponding to each specific microscopic update rule is used for describing the subsystem, and all the subsystems are activated in sequence.
 By utilizing switched system theory, we identify the theoretical condition for the emergence of cooperative behavior.  Under this condition, we  have proved that the periodically switched system with different switching rules can converge to the full cooperation state. Finally, we
 consider an example where two strategy update rules, that is, the  imitation and pairwise-comparison updating, are periodically switched, and find that our numerical calculations validate our theoretical results.
\end{abstract}

\begin{IEEEkeywords}
Periodically switched system, strategy update rules, evolutionary game theory, cooperative behavior, regular networks.
\end{IEEEkeywords}

\IEEEpeerreviewmaketitle

%%%periodic switching  solely

\section{Introduction}
\IEEEPARstart{C}{ooperative}  behavior  is ubiquitous in nature and human societies~\cite{Hauert2010MIT, Perc2017PR, Vasconcelos2013NCC}. Yet, understanding the evolution and maintenance of cooperation  in a population of self-interested  individuals has been a major scientific puzzle for decades~\cite{ Vincent 2005CUP, Riehl22018ARC, Su2022SA}. Evolutionary game theory provides a powerful mathematical framework to investigate the problem of cooperation~\cite{Hofbauer1998CUP, Vasconcelos2015M3AS, Su2022NHB}, and the prisoner's dilemma game has been widely adopted as a classic paradigm for studying cooperation among selfish individuals~\cite{Nowak1992nature, Perc2008PRE, Perc2006NJP}.

%However, the previous works basically assumed that the applied update rule is deterministic and time-invariant during the evolution of cooperation.

In the context of prisoner's dilemma game, the strategy update rules (i.e., the way in which the strategies of individuals are changed), have a crucial impact on the evolutionary outcomes in a spatially structured population~\cite{ Amaral2018PRE,  Blume1995BEB,  Govaert2021TCNS, Como2020TCNS, Allen2017nature, Banez2021WN, Barreiro16TSMC, Govaert22CSL}. For example, these microscopic rules determine whether cooperation can evolve~\cite{Ohtsuki06Nature, Szab07PR, SzaboPRE98}, and affect the final level of cooperation  in social dilemma of cooperation~\cite{Ramazi2022Auto, Fu2011PRSB, chen2008PRE}. In particular, under a given strategy update rule, Ohtsuki and Nowak  derived a replicator equation on regular networks to study how the fraction of cooperators  changes;   they found that natural selection favors defection under the so-called pairwise-comparison updating in the prisoner's dilemma  game~\cite{Ohtsuki2006JTB}. By contrast, cooperation can emerge if the benefit-to-cost ratio exceeds the degree of the specific interaction network, when an alternative death-birth updating is applied~\cite{Ohtsuki06Nature, Ohtsuki2006JTB}.

Note that many previous works on evolutionary dynamics of cooperation impose the assumption that the strategy update rule is fixed and time-invariant\cite{Shi2020TNSE, Zhang2019IEEETCS, Hu2021TNSE}. However, this assumption is not always realistic and merely represents an oversimplification of reality. Indeed, during the evolutionary process individuals can use different strategy update rules or switch their update rule to another. Due to periodic changes in the environment~\cite{Fleming1986JM, Hutto1981Auk, Conner11981Auk} or driven by their periodic behavior modes~\cite{Clark1980book, Szolnoki2013SR, Karlen1994AA},  they may periodically switch their strategy update rules. In addition, the basic idea of switching has been introduced into evolutionary games~\cite{Wang2020TNSE,  Liberzon2005BB, Cunha2021ECC}. For example, Hilbe  \emph{et  al.}  studied evolutionary dynamics of cooperation in stochastic games where the game structure can be periodically switched~\cite{Hilbe2018Nature}. Subsequently, Su  \emph{et  al.}  studied evolutionary dynamics on complex networks where game transitions happen~\cite{Su19PNAS}. Very recently, Shu and Fu studied replicator dynamics with feedback-evolving games, where periodic switching of two different game matrices is considered~\cite{ShuPRSA2022}. Also noticeable, Li  \emph{et  al.}  studied evolutionary dynamics of cooperation on temporal networks, where periodic switching of structured populations can happen~\cite{LiNC2020}. However, far fewer works have studied the evolutionary dynamics of cooperation in the scenario of periodic switching of update rules. For example, it is still unclear whether cooperative behavior can emerge in the networked prisoner's dilemma game when periodic switching of update rules is considered. Furthermore, given a tentative positive answer to the possibility question, one still wants to explore further what the mathematical conditions are  for the emergence of cooperative behavior among interacting individuals.
%Nevertheless,  the above assumption is not always realistic since periodic varying update rules can be observed in the real world.  For instance, in nature, the animal group foraging behavior changes in seasons~\cite{Fleming1986JM, Hutto1981Auk, Conner11981Auk}. In human society, significantly influenced by promotion activities (e.g., ¡°Black Friday¡±, ¡°11.11 Global Shopping Festival¡±), and holidays (e.g., Christmas)~\cite{Bir2020HSSC}, the shopping behavior of humans are also routinely changing.   To date,  few theoretical studies have explored what effect the periodically switching update rules has on the  evolutionary game dynamics of cooperation in structured populations, although the concept of switched control frequently occurs in evolutionary game system~\cite{Li2008IEEEIEA, Liu2018Chaos, Su19PNAS, Liberzon2005BB, Cunha2021ECC}, such as the switching of strategies~\cite{Li2008IEEEIEA}, sanctioning policies~\cite{Liu2018Chaos} and game mechanisms~\cite{Su19PNAS}.  Based on this, it is worth further investigating the theoretical conditions that promotes the evolution of cooperation.

In order to answer the questions mentioned above, in this paper, we study the evolutionary dynamics of cooperation
in the networked prisoner's dilemma game with periodic switching strategy update rules. By means of the standard form of replicator equation on graphs for the prisoner's dilemma game, we formulate our mathematical model of switched system for the periodic switching strategy update rules. By using switched system theory,  we derive a theoretical condition for the emergence of cooperative behavior  on graphs.
Under this condition, we show that  the periodically switched system with different switching update rules  can evolve into a full cooperation state. Finally,  as a supporting example, we consider that individuals can periodically switch the update rules between the classic  imitation (IM)  and pairwise-comparison (PC) updating in the networked prisoner's dilemma game, and find that our numerical calculations verify our theoretical results. The main contributions of our work are listed as follows:
\begin{itemize}
    \item  We formulate a mathematical framework to study the evolutionary dynamics of cooperation with periodic switching strategy update rules.
       \item We theoretically prove that the periodically switched system can reach a full cooperation state. In addition, we identify the mathematical condition for the emergence of cooperative behavior.
    \item We consider an example where two strategy update rules, that is, the IM and PC updating, are periodically switched. In this example, we find that the final evolutionary outcome can support the theoretical condition for the emergence of cooperation we obtained.
    \end{itemize}
The rest of this paper is organized as follows. In Section II, we formalize the problem. Here, we first describe the networked prisoner's dilemma game. Then, we depict the replicator equation with a strategy update rule  on regular networks.  Afterwards we characterize the periodically switched system with different strategy update rules. In Section III, we present the theoretical results, including the existence and (asymptotic) stability analysis of equilibria.
 In Section IV, we present numerical results to verify our obtained theoretical results.
 Finally, conclusions are summarized  in Section V.

\textbf{Notation:}
Throughout the paper, we denote the set of nonnegative  and strictly positive integer numbers by $\mathbb{N}$ and $\mathbb{N_{+}}$, respectively. Let $\mathbb{R}=(-\infty, +\infty)$ and
$\mathcal{M}=\{1,  2,  \ldots,  m\}$,  where $m\geq 2$ is a fixed positive integer.   For a number $x\in \mathbb{R}$, $\left| x\right|$ denotes its absolute value. $\mathop{E(X)}$ represents the expected value of a random variable $\mathop{X}$.

\section{PROBLEM FORMULATION}
\subsection{Networked Prisoner's Dilemma Game}
We consider a structured population of $n\in\mathbb{N_{+}}$ players, whose interaction structure is characterized by a regular network with a general degree $k>2$ ($k\in\mathbb{N_{+}}$).
In this graph, vertices represent the  agents and the edges show who interacts with whom. In each round, each player $i$ plays the evolutionary prisoner's dilemma game with its neighbors, and can choose to cooperate ($C$) or defect ($D$).  We consider the payoff matrix for the game as
\begin{equation}
A=\bordermatrix{
      &C  &D  \cr
C &b-c & -c\cr
D &b & 0 },
\label{matrix}
\end{equation}
where $b$ represents the benefit of cooperation and $c$ ($0<c<b$) represents the cost of cooperation. After engaging in the pairwise interactions with all the adjacent neighbors, each player reaps its accumulated payoff $\pi_i$ determined by the payoff matrix. We denote the fitness (i.e., the reproductive rate) of this player by $g_i$, and it is defined to be  $g_i = 1-\omega+\omega \pi_i$, where $\omega\in[0, 1]$ measures the strength of selection~\cite{Ohtsuki06Nature}.

\begin{assumption}\label{assumption1}
The network considered here is \emph{connected} and its topology is a regular network with the degree $k>2$.
\end{assumption}Assumption \ref{assumption1} implies that the $k$-regular network cannot be disconnected, and this specific network topology are not valid for all $k$-regular networks, such as  ring and tree~\cite{Barab12NS}, since $k>2$.

\subsection{Replicator Equation with a Strategy Update Rule}
According to the evolutionary selection principle, players in the population adjust their strategy from time to time, and the way how to do it may influence the final evolutionary outcome significantly. Here, we consider $m\geq 2$ $(m\in\mathbb{N_{+}})$ different strategy update rules from the set $\mathcal{M}$.
From the obtained form of the replicator equation for prisoner's dilemma game on graphs in Ref.  \cite{Ohtsuki2006JTB}, the dynamical equation for the strategy update rule $i$ ($i\in \mathcal{M}$) can be given as
\begin{equation}\label{eq2}
\begin{aligned}
\frac{\textrm{d}x(t)}{\textrm{d}t}=f_{i}(x(t))=\alpha_{i}(\omega, k, b, c)x(t)(1-x(t)),
\end{aligned}
\end{equation}
where $x(t)\in [0, 1]$ is the fraction of $C$-players (i.e., cooperators) in the population,  and $\alpha_{i}(\omega, k, b, c)$ is a function of selection strength $\omega$, the network degree $k$, the cooperation benefit $b$ and the cooperation cost $c$. Here, we denote by $x(0)=x_0$ the initial fraction of cooperators in the population. We note that using the pair-approximation approach, the form of Eq.~(\ref{eq2}) can be obtained for a series of microscopic update rules under weak selection (i.e., $\omega \rightarrow 0$) when the payoff matrix of the game is described by Eq.~(\ref{matrix}). Accordingly, Eq.~(\ref{eq2}) can be used to describe the evolutionary dynamics of cooperation on a regular network when the microscopic update rule is fixed during the evolutionary process.
\begin{assumption}\label{assumption2}
The population structure, the payoff matrix $A$, and the strength of selection $\omega$ are fixed.
\end{assumption}
Assumption \ref{assumption2}  implies  that the considered parameters (i.e.,  $n$, $\omega$, $k$, $b$ and $c$) do not change over time, that is, they are fixed.  Therefore,   the function $\alpha_i(\omega, k, b, c)$ can be simplified to be $\alpha_i$ since it is now just a constant.

In this case, Eq. \eqref{eq2} has two equilibria, which are $x^*=0$ and $x^*=1$. If $\alpha_{i}>0$ and  the initial state $x_0\in (0, 1)$, then $\frac{\textrm{d}x(t)}{\textrm{d}t}>0$ for all $t\geq0$. Note that this implies  that the equilibrium point $x^*=0$ is unstable and $x^*=1$ is (asymptotically) stable. If $\alpha_{i}<0$  and $x_0\in (0, 1)$,  then $\frac{\textrm{d}x(t)}{\textrm{d}t}<0$ for all $t\geq0$, and the equilibrium point $x^*=0$ is (asymptotically) stable and $x^*=1$ is unstable.

\begin{figure}[!t]
\centering
\includegraphics[width=3in]{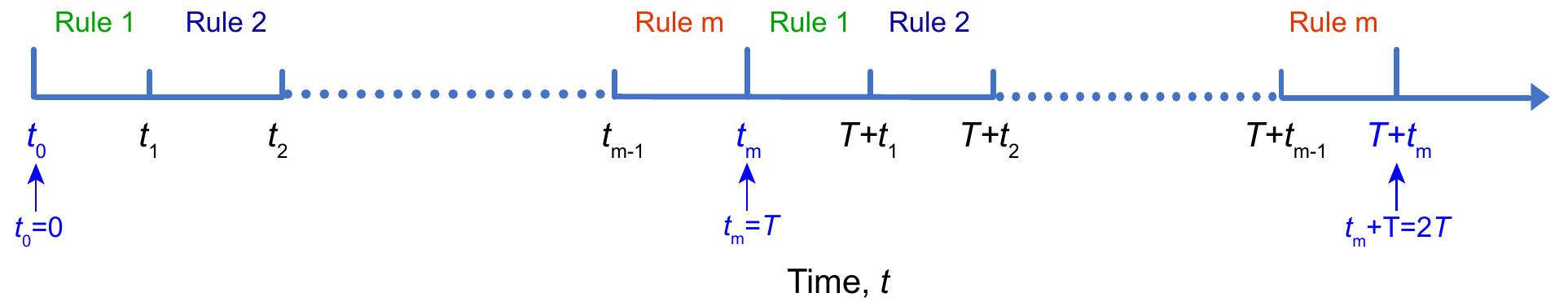}
\caption{Schematic diagram of periodic switching of multiple strategy update rules over time. Rule 1 which is active in time interval $[t_0+\theta T, t_1+\theta T)$ changes to rule 2 at time $t_1+\theta T$, where $\theta \in \mathbb{N}$.  Similarly, rule $i$ activated in time interval $[t_{i-1}+\theta T, t_i+\theta T)$ changes to rule $i+1$ at time $t_i+\theta T$ for any $i\in \mathcal{W}$. Rule $m$ activated in time interval $[t_{m-1}+\theta T, (\theta+1) T)$ changes to rule $1$ at time $(\theta+1) T$.}
\label{fig1}
\end{figure}

%Accordingly, $\alpha_{i}(k, b, c)$ is a constant. For the sake of readability, we drop $t$ and $k$, $b$, $c$, so $x$ and $\alpha_{i}$ stand for $x(t)$ and  $\alpha_{i}(k, b, c)$.

\subsection{Periodically Switched System with  Different Strategy Update Rules}
In fact, individuals may adopt more than one strategy update rule, and  may periodically use different strategy update rules. Accordingly, the evolutionary dynamics of cooperation in this scenario where multiple update rules are periodically switched can be described by
\begin{equation}\label{eq3}
\begin{aligned}
\frac{\textrm{d}x(t)}{\textrm{d}t}=f_{\sigma(t)}(x(t)),
\end{aligned}
\end{equation}
where $\sigma(t)$ is a periodic switching signal, given by
\begin{equation}\label{eq4}
\begin{aligned}
\sigma (t)=i,   \,\,\,\,t\in\Omega_{i\theta}.
\end{aligned}
\end{equation}
Here $\Omega_{i \theta}= [\theta T + t_{i-1}, \theta T + t_i)$, $\theta \in \mathbb{N}$, $i \in \mathcal{M}$.
  $t_{0}=0$ and $t_{m}=T$ is the period of the periodic switching rule sequence, where $m$ is the number of all subsystems with distinct strategy update rules, and each subsystem is described by Eq.~\eqref{eq2} under Assumption \ref{assumption2}. The switching signal
 $\sigma(t)$:$[0,\infty)$$\mapsto$$ \mathcal{M}$ is assumed to be a piecewise constant function continuous from the right. This implies that subsystem $i$ under  the $i$-th switching rule is activated over the time interval $\Omega_{i\theta}$, and the subsystem $i+1$ under the $(i+1)$-th switching rule will be activated at $ \theta T+t_{i}$  for any $i\in \mathcal{M}$ (see Fig. \ref{fig1}). Denote the activating period of subsystem $i$  by $\Delta t_{i}=(\theta T+t_{i})-(\theta T+t_{i-1})=t_i - t_{i-1}$. Accordingly, $t_{\rho}=\sum\limits_{i=1}^{\rho}\Delta t_{i}$ is the $\rho$-th switching instant in the first period (i.e., $\theta=0$), and the period $T$ is $T=\sum\limits_{i=1}^{m}\Delta t_{i}$. For convenience, we use $\bar{\Omega}_\theta$ to represent the  period $[ \theta T, (\theta+1) T)$, and thus $\bar{\Omega}_\theta=[ \theta T, (\theta+1) T)=\bigcup\limits_{i=1}^{m}\Omega_{i \theta}$, for any $\theta \in \mathbb{N}.$

%All switching points are $kT+t_{i}$, and these points should satisfy the following constraints:
%1)I do not understand what this sentence means. $\sigmal$ is a function from time to mode numbers. It is unclear what you mean by $\sigmal $ is activated. You can say that Mode 1 is activated.

%I think the problem formulation is not very clear yet. What do you want to control to achieve your objective? It seems to be the switching signal \sigma, but it also seems to be the activating order of systems mentioned later. Please make it explicit somewhere in this subsection.

From the above description, one can observe that  the periodically switched system \eqref{eq3} is not only affected by the activation sequential order of $m$ subsystems with distinct switching rules, but also by the switching instant (i.e., $\theta T+t_{i},  \theta \in\mathbb{N}$) between two subsystems. Subsequently,  we give the definition of $m$-ary sequence set $\mathcal{A}$ for arranging all possible sequential order of all subsystems considered over the period $\bar{\Omega}_\theta$, and determine the specific expression of \eqref{eq3} for any order of the activation sequence.
%we now formulate the two following optimal control problems for institutional reward and punishment respectively.

 %Problem formulation. I have read the problem formulation for quite some time and found that perhaps the current formulation still does not touch the essence of the problem. You can see in my detailed comments in the document that the core of the problem is related not only to the order of the activation sequence, but also the activation time instance t_i and the period T, as shown in Lemma 1. So the problem formulation should be modified accordingly. You can even ignore the permutation stuff and the set B, etc.

%ÇÐ»»ÏµÍ³²»½öÊÜµ½¸üÐÂ¹æÔòÇÐ»»Ë³ÐòµÄÓ°Ïì£¬Ò²ÓëÃ¿ÖÖÇÐ»»´ÎÐòÏÂ£¬¸üÐÂ¹æÔòÖ®¼äµÄÇÐ»»Ê±¼äÏà¹Ø¡£ÎªÁËÑÐ¾¿µÄ·½±ã£¬ÎÒÃÇÊ×ÏÈ¸ø³öËùÓÐ¿ÉÄÜµÄ´ÎÐòÏÂµÄ¸üÐÂ¹æÔòµÄ´ÎÐò£¬²¢È·¶¨ÔÚÃ¿Ò»ÖÖ¸üÐÂ´ÎÐòÏÂ£¬¸üÐÂ¹æÔòµÄ±í´ïÊ½¡£
%\begin{definition}
%\label{definition1}
%\begin{equation}\label{eq5}
%\begin{aligned}\mathcal{A}=\{(\alpha_{i_1}, \dots, \alpha_{i_m})| (i_1. \dots, i_m)\in \mathcal{P}\},
%\end{aligned}
%\end{equation}
%where $\mathcal{P}$ represents all the possible permutations of $\mathcal{M}$,  and $(\alpha_{i_1}, \dots, \alpha_{i_m})$ denotes the activation sequence.
%\end{definition}
%ÎÒ¾õµÃÕâÒ»²¿·ÖµÄproblem formulation¿ÉÄÜ»¹ÐèÒªÔÙ×ÐÏ¸Ë¼¿¼Ò»ÏÂ¡£Êµ¼ÊÉÏ£¬ÎÒÃÇÒª½â¾öµÄÎÊÌâÊÇÔÚÊ²Ã´Ìõ¼þÏÂ£¬cooperation¿ÉÒÔÊµÏÖ£¿¸ü¼Ó¾ßÌåµØËµ£¬Õâ¸öÌõ¼þ²»ÊÇ½ö½öÓë\alpha_1,\dots,\alpha_mµÄË³ÐòÏà¹Ø£¬ÆäÊµ¸ü¼Ó±¾ÖÊµÄÊÇÓëÕâÐ©È¡ÖµÒÔ¼°t_iºÍTÏà¹Ø£¬Ò²¾ÍÊÇLemma 1ÖÐÌõ¼þ(9)ºÍ(11)£¬Ö»ÒªÂú×ãÁËÕâÐ©Ìõ¼þ£¬Ë³ÐòÊ²Ã´µÄÎÊÌâÒ²¾Í²»´æÔÚÁË¡£Èç¹ûÎÒÃ»ÓÐ½âÊÍÇå³þ£¬ºóÐø¿ÉÒÔÌÖÂÛÒ»ÏÂ¡£

\begin{definition}
\label{definition1}
\begin{equation}\label{eq5}
\begin{aligned}\mathcal{A}=\big\{(\alpha_{\lambda_1}, \dots, \alpha_{\lambda_j}, \dots, \alpha_{\lambda_m})\big| (\lambda_1, \dots, \lambda_j, \dots, \lambda_m)\in \mathcal{B}\big\}.
\end{aligned}
\end{equation}
Here, $\mathcal{B}$ represents all the possible permutations of the set $\mathcal{M}$,  and the sequence $(\alpha_{\lambda_1}, \dots, \alpha_{\lambda_j}, \dots, \alpha_{\lambda_m})$ denotes the activation sequence, where $j\in\mathcal{W}$ and $\mathcal{W}=\{2, \dots, m-1\}$.
\end{definition}

%For example, there are two possible activation sequences of the set  $\{\alpha_1, \alpha_2\}$, namely $(\alpha_1, \alpha_2)$, and $(\alpha_2, \alpha_1)$.
For any activation sequence $(\alpha_{\lambda_1}, \dots, \alpha_{\lambda_j}, \dots, \alpha_{\lambda_m})\in \mathcal{A}$ of subsystems considered at each period $\bar{\Omega}_\theta$,  we know that  subsystem $\frac{\textrm{d}x(t)}{\textrm{d}t}=\alpha_{\lambda_1}x(t)(1-x(t))$ is activated in $\Omega_{1\theta}$, subsystem $\frac{\textrm{d}x(t)}{\textrm{d}t}=\alpha_{\lambda_j}x(t)(1-x(t))$ is activated in $\Omega_{j\theta}$, until subsystem $\frac{\textrm{d}x(t)}{\textrm{d}t}=\alpha_{\lambda_m}x(t)(1-x(t))$ is activated in $\Omega_{m\theta}$.  In this case, the periodically switched system \eqref{eq3} is rewritten as
\begin{equation}\label{eq6}
\frac{\textrm{d}x(t)}{\textrm{d}t}\bigg|_{(\alpha_{\lambda1}, \dots, \alpha_{\lambda j}, \dots, \alpha_{\lambda_m})}=\left\{
\begin{aligned}
\alpha_{\lambda_1}&x(t)(1-x(t)),& &t\in\Omega_{1\theta},\\
&\vdots&\\
\alpha_{\lambda_j}&x(t)(1-x(t)),& &t\in\Omega_{j\theta},\\
&\vdots&\\
\alpha_{\lambda_m}&x(t)(1-x(t)),& &t\in\Omega_{m\theta}.
\end{aligned}
\right.
\end{equation}
For convenience, we denote the system above by $\Sigma|_{(\alpha_{\lambda_1}, \dots, \alpha_{\lambda_j}, \dots, \alpha_{\lambda_m})}$, and  we have
\begin{equation}\label{eq7}
\begin{aligned}
x(t)=\frac{1}{1+\frac{1-x_0}{x_0}e^{-\Lambda_{i\theta}(t)}}
\end{aligned}
\end{equation}
for all $t\in\Omega_{i\theta}$, where $\Lambda_{1\theta}(t)=\theta \Big[\sum\limits_{i=1}^{m-1}(\alpha_{\lambda_i}-\alpha_{\lambda_{i+1}})t_{i}+(\alpha_{\lambda_m}-\alpha_{\lambda_1})T\Big]+\alpha_{\lambda_1}t$,
$\Lambda_{j\theta}(t)= \theta \Big[\sum\limits_{i=1}^{m-1}(\alpha_{\lambda_i}-\alpha_{\lambda_{i+1}})t_{i}+(\alpha_{\lambda_m}-\alpha_{\lambda_1})T\Big]+\sum\limits_{l=1}^{j-1}(\alpha_{\lambda_l}-\alpha_{\lambda_{l+1}})( \theta T+t_{l})+\alpha_{\lambda_j}t$,  $j\in\mathcal{W}$, and $\Lambda_{m\theta}(t)=( \theta +1)\sum\limits_{i=1}^{m-1}(\alpha_{\lambda_i}-\alpha_{\lambda_{i+1}})t_{i}+\beta_{m}t$.

Accordingly,
 the periodically switched system set under the activation sequence $\mathcal{A}$ over the period  $\bar{\Omega}_\theta$ is
 \begin{equation}\label{eq8}
 \begin{aligned}
\Delta=\big\{\Sigma|_{(\alpha_{\lambda_1}, \dots, \alpha_{\lambda_j}, \dots, \alpha_{\lambda_m})}\big|\,\,(\lambda_1, \dots, \lambda_j, \dots, \lambda_m)\in \mathcal{B}\big\}.
\end{aligned}
\end{equation}

\begin{figure}[!t]
\centering
\includegraphics[width=3.5in]{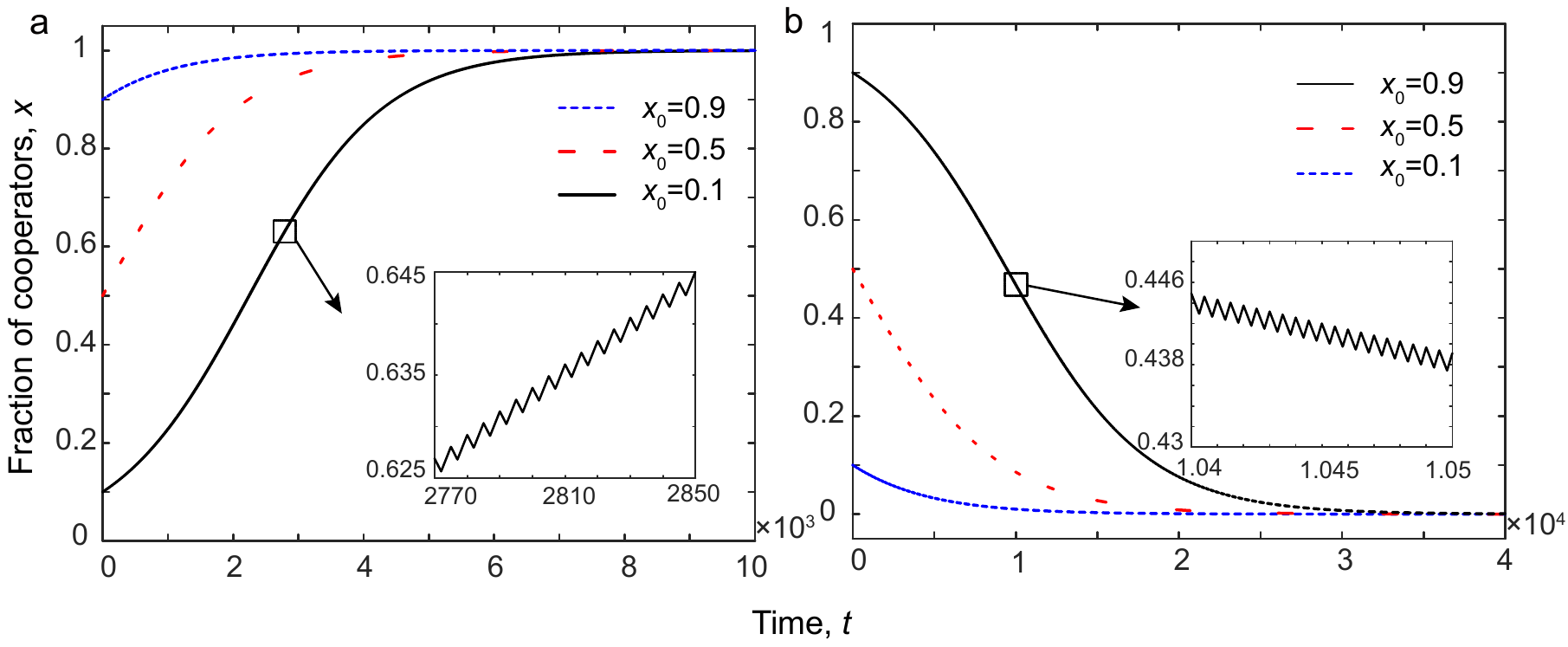}
\caption{Time evolution of the fraction of cooperators for  ``first PC updating, then IM updating''  at each period $\bar{\Omega}_\theta$  under different initial states. Three different initial states are $x_0=0.1$,  $x_0=0.5$, and $x_0=0.9$, respectively.
The switching point is $t_{1}=2$ for panel $a$, and $t_{1}=3$ for panel $b$.
Other parameters: $\omega=0.01$, $b=2$, $c=0.2$, $k=4$, and $T=5$.}
\label{fig2}
\end{figure}

% is a monotonically function at each activating internal $t\in\Omega_{i}$, $\theta \in \mathbb{N}$.
For each  subsystem $i$ (i.e., $\frac{\textrm{d}x(t)}{\textrm{d}t}=\alpha_{\lambda_i}x(t)(1-x(t)), t\in\Omega_{i\theta}$), we have the following two remarks.
\begin{Remark}
\label{Remark1}
Under Assumption~\ref{assumption2}, the subsystem $i$ has two equilibria, which are $x^*=0$  and $x^*=1$. If  $\alpha_{\lambda_i}>0$ and $x_0\in (0, 1)$, then $x^*=0$ is unstable and $x^*=1$ is (asymptotically) stable.
If $\alpha_{\lambda_i}<0$  and $x_0\in (0, 1)$, then $x^*=0$ is (asymptotically) stable and $x^*=1$ is unstable.
\end{Remark}

\begin{Remark}
\label{Remark2}
The trajectory of each subsystem $i$ in each time interval $\Omega_{i\theta}$,  $\theta \in \mathbb{N}$ is either monotonically increasing or decreasing. If $\alpha_{\lambda_i}>0$ and $x_0\in (0, 1)$, $\frac{\textrm{d}x(t)}{\textrm{d}t}>0$ for all $t\in\Omega_{i\theta}$ and thus the subsystem $i$ is  monotonically increasing for all $t\in\Omega_{i\theta}$. While if $\alpha_{\lambda_i}<0$, then $\frac{\textrm{d}x(t)}{\textrm{d}t}<0$ for all $t\in\Omega_{i\theta}$ and thus the subsystem $i$  is  monotonically decreasing for all $t\in\Omega_{i\theta}$.
\end{Remark}

In this work, we aim to explore the mathematical condition of all periodic  switching instants for any  activation sequence, in which the full cooperation state can be reached. To  this end,  we impose the following standing assumption.
\begin{assumption}\label{assumption3}
There exists at least one  subsystem  with a switching rule which renders the equilibrium point $x^*=1$ stable, that is, ${\exists} \,\, j\in \mathcal{M}$, s.t. $\alpha_{\lambda_j}>0$.
\end{assumption}
%Assumption \ref{assumption2} stipulates that when the equilibrium point  $x^*=1$ of all subsystems is unstable in our considered system, the achievement of our goal is difficult. Note that we do not study the case where all subsystems  are (asymptotically) stable at $x^*=1$ or $x^*=0$, that is, $\beta_{i}>0$ for all $i\in \mathcal{M}$ or $\beta_{i}<0$ for all $i\in \mathcal{M}$.
One can observe that this assumption ensures that it is possible to render the cooperative equilibrium state $x^*=1$ asymptotically stable by  reasonably adjusting the activation time (e.g., by always only activating the subsystem where $x^*=1$ is the asymptotically stable equilibrium point).

%It is not precise to say that a system is a function. Perhaps you can say, the trajectory in each time interval $\Omega_i$ is either monotonically increasing or decreasing.

\section{Theoretical Results}
%\subsection{Theoretical condition for the prevalence of cooperation}
Without loss of generality, we carry out the theoretical analysis on any  periodically switched system $\Sigma|_{(\alpha_{\lambda_1}, \dots, \alpha_{\lambda_j}, \dots, \alpha_{\lambda_m})}\in \Delta$ with the activation sequence $(\alpha_{\lambda_1}, \dots, \alpha_{\lambda_j}, \dots, \alpha_{\lambda_m})$, which is described by \eqref{eq6}.
Moreover,  it is clear from  Remark \ref{Remark2} that  the extremum (maximum or minimum) of a trajectory of \eqref{eq6} over the period $\bar{\Omega}_\theta$ is obtained either  at the boundary or at the switching time, i.e., $\theta T+t_{i}, \theta \in\mathbb{N}$.  Then,  we give a lemma to illustrate the relationship between the activation sequence $(\alpha_{\lambda_1}, \dots, \alpha_{\lambda_j}, \dots, \alpha_{\lambda_m})$ and the sequence $\{x(\theta T+t_{i})\}_{\theta\in \mathbb{N}}$.

%Below we give a lemma to illustrate the connection between activation sequences and switching sequences.

%At what follows, we give an essential lemma to illustrate the sequence $\{x(\theta T+t_{i})\}_{\theta\in \mathbb{N}}$, which consists of all the switching points, i.e., $\theta T+t_{i}$.

\begin{lemma}\label{lemma1}
For all $v\in\mathcal{V}$ (i.e., $\mathcal{V}=\{0, \dots, m-1\}$) and $\theta \in \mathbb{N}$, the following statements hold:
\begin{enumerate}
 \item \begin{equation}\label{eq9}
\begin{aligned}
\sum\limits_{i=1}^{m} \alpha_{\lambda_i}(t_{i}-t_{i-1})>0
\end{aligned}
\end{equation} if and only if
\begin{equation}\label{eq10}
\begin{aligned}
x(\theta T+t_{v})<x((\theta +1)T+t_{v});
\end{aligned}
\end{equation}
 \item \begin{equation}\label{eq11}
\begin{aligned}
\sum\limits_{i=1}^{m} \alpha_{\lambda_i}(t_{i}-t_{i-1})<0
\end{aligned}
\end{equation} if and only if
\begin{equation}\label{eq12}
\begin{aligned}
x(\theta T+t_{v})>x((\theta +1)T+t_{v}).
\end{aligned}
\end{equation}
 \end{enumerate}
\end{lemma}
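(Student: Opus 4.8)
The plan is to reduce both equivalences to a single sign computation by exploiting the monotone structure of the closed-form solution \eqref{eq7}. Writing $\phi(u)=\big(1+\tfrac{1-x_0}{x_0}e^{-u}\big)^{-1}$, the solution on each interval reads $x(t)=\phi(\Lambda_{i\theta}(t))$ for $t\in\Omega_{i\theta}$. For $x_0\in(0,1)$ the constant $\tfrac{1-x_0}{x_0}$ is strictly positive, so $\phi$ is strictly increasing in its argument. Hence comparing $x$ at two times is equivalent to comparing the corresponding values of the phase variable $\Lambda$, and both claimed equivalences will drop out at once from the sign of one fixed quantity.

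First I would pin down which subsystem governs each sampling instant. Since $\sigma(\cdot)$ is right-continuous, the point $\theta T+t_v$ is the left endpoint of $\Omega_{(v+1)\theta}$ for every $v\in\mathcal{V}$, so $x(\theta T+t_v)=\phi\big(\Lambda_{(v+1)\theta}(\theta T+t_v)\big)$ and similarly $x((\theta+1)T+t_v)=\phi\big(\Lambda_{(v+1)(\theta+1)}((\theta+1)T+t_v)\big)$. The heart of the argument is to show that the per-period increment of the phase is independent of both $\theta$ and $v$ and equals the period integral of the rate, namely
\[
\Lambda_{(v+1)(\theta+1)}\big((\theta+1)T+t_v\big)-\Lambda_{(v+1)\theta}\big(\theta T+t_v\big)=\sum\limits_{i=1}^{m}\alpha_{\lambda_i}(t_i-t_{i-1}).
\]
The cleanest route is to recognize that $\Lambda_{i\theta}(t)=\int_0^t \alpha_{\sigma(s)}\,\textrm{d}s$, the continuous cumulative integral of the piecewise-constant rate (right-continuity at the first switch $t_1$ already checks this out). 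Because $\alpha_{\sigma(\cdot)}$ is $T$-periodic and the interval $[\theta T+t_v,\,(\theta+1)T+t_v]$ has length exactly $T$, its integral over that interval equals the integral over one full period $\bar{\Omega}_\theta$, which is precisely $\sum_{i=1}^m\alpha_{\lambda_i}(t_i-t_{i-1})$.

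Combining the two observations closes the proof: since $\phi$ is strictly increasing, $x((\theta+1)T+t_v)-x(\theta T+t_v)$ has the same sign as the phase increment, i.e.\ the sign of the fixed number $\sum_{i=1}^m\alpha_{\lambda_i}(t_i-t_{i-1})$, uniformly in $v$ and $\theta$; statement (1) and statement (2) then follow simultaneously. I expect the only genuine obstacle to be bookkeeping, and only if one insists on avoiding the integral viewpoint: substituting the three explicit branches of $\Lambda_{i\theta}$ (the cases $j=1$, $j\in\mathcal{W}$, and $j=m$) and differencing them by hand requires the telescoping identity $\sum_{l=1}^{j-1}(\alpha_{\lambda_l}-\alpha_{\lambda_{l+1}})=\alpha_{\lambda_1}-\alpha_{\lambda_j}$, after which each branch collapses to the same period sum $\sum_{i=1}^m\alpha_{\lambda_i}(t_i-t_{i-1})$. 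The integral argument sidesteps this case analysis entirely and makes the $v$- and $\theta$-independence transparent.
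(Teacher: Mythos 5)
Your proof is correct, and the overall strategy coincides with the paper's: both reduce the comparison of $x(\theta T+t_v)$ and $x((\theta+1)T+t_v)$ to the sign of the per-period increment of the exponent $\Lambda$, and both rely (explicitly or implicitly) on the strict monotonicity of $u\mapsto\bigl(1+\tfrac{1-x_0}{x_0}e^{-u}\bigr)^{-1}$ for $x_0\in(0,1)$. Where you genuinely diverge is in how the key identity
\[
\Lambda_{(v+1)(\theta+1)}\bigl((\theta+1)T+t_v\bigr)-\Lambda_{(v+1)\theta}\bigl(\theta T+t_v\bigr)=\sum_{i=1}^{m}\alpha_{\lambda_i}(t_i-t_{i-1})
\]
is established. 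The paper writes the difference $\mathcal{G}(v)$ as $\mathcal{L}\mathcal{F}\bigl(e^{\mathcal{T}_{i\theta}(v)}-1\bigr)$ with $\mathcal{L},\mathcal{F}>0$ and then computes $\mathcal{T}_{i\theta}(v)$ by brute force in three separate cases ($v=0$, $v\in\{1,\dots,m-2\}$, $v=m-1$), each collapsing to the same period sum after telescoping. You instead identify $\Lambda_{i\theta}(t)=\int_0^t\alpha_{\sigma(s)}\,\mathrm{d}s$ and invoke $T$-periodicity of the integrand, so the increment over any window of length $T$ is the period integral, independent of $v$ and $\theta$ by inspection. This buys a shorter, more conceptual proof that makes the uniformity in $v$ and $\theta$ transparent and avoids the case analysis entirely; the cost is that you must verify once that the paper's three explicit branch formulas for $\Lambda_{i\theta}$ really are the cumulative integral (they are --- each branch telescopes to $\theta\sum_i\alpha_{\lambda_i}(t_i-t_{i-1})+\sum_{l<j}\alpha_{\lambda_l}(t_l-t_{l-1})+\alpha_{\lambda_j}(t-\theta T-t_{j-1})$, modulo the paper's typo $\beta_m$ for $\alpha_{\lambda_m}$), a point you correctly flag as the only bookkeeping required. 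One small thing worth stating explicitly, as the paper does, is that the argument presupposes $x_0\notin\{0,1\}$, since otherwise $\tfrac{1-x_0}{x_0}$ degenerates and the trajectory is constant.
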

\begin{proof}
For any $v\in\mathcal{V}$,  define the difference between $x(\theta T+t_{v})$ and $x((\theta +1)T+t_{v})$ to be
\begin{equation}\label{eq13}
\begin{aligned}
\mathcal{G}(v)&=x(\theta T+t_{v})-x((\theta +1)T+t_{v})\\
&=\frac{1}{1+\frac{1-x_0}{x_0}e^{-\Lambda_{i\theta}(\theta T+t_{v})}}-\frac{1}{1+\frac{1-x_0}{x_0}e^{-\Lambda_{i(\theta+1)}[(\theta +1)T+t_{v}]}}\\
&=\frac{\frac{1-x_0}{x_0}\{e^{-\Lambda_{i(\theta+1)}[(\theta +1)T+t_{v}]}-e^{-\Lambda_{i\theta}(\theta T+t_{v})}\}}{\{1+\frac{1-x_0}{x_0}e^{-\Lambda_{i\theta}(\theta T+t_{v})}\}\{1+\frac{1-x_0}{x_0}e^{-\Lambda_{i(\theta+1)}[(\theta +1)T+t_{v}]}\}}\\
&=\mathcal{L} \mathcal{F}(e^{\mathcal{T}_{i\theta}(v)}-1),
\end{aligned}
\end{equation}
where $\mathcal{L}=\frac{e^{\Lambda_{i(\theta+1)}[(\theta +1)T+t_{v}]}}{\{1+\frac{1-x_0}{x_0}e^{-\Lambda_{i\theta}(\theta T+t_{v})}\}\{1+\frac{1-x_0}{x_0}e^{-\Lambda_{i(\theta+1)}[(\theta +1)T+t_{v}]}\}} $,  $\mathcal{F}=\frac{1-x_0}{x_0}\frac{1}{e^{\Lambda_{i(\theta+1)}[(\theta +1)T+t_{v}]+\Lambda_{i\theta}(\theta T+t_{v})}}$, and $\mathcal{T}_{i\theta}(v)=\Lambda_{i\theta}(\theta T+t_{v})-\Lambda_{i(\theta+1)}[(\theta +1)T+t_{v}]$.
Since the exponential function $h(u)=e^{u}$ is monotonically increasing  for $u\in\mathbb{R}$, and is positive (i.e.,  $h(u)>0$)  for all $u\in\mathbb{R}$,  one can check that  $\mathcal{L}>0$ and $\mathcal{F}>0$ when the initial state  $x_0$ is not any of the equilibria. Therefore, the sign of $\mathcal{G}(v)$ is  equal to that of $\mathcal{T}_{i\theta}(v)$.

Next, we present the specific expression of the function $\mathcal{T}_{i\theta}(v)$ for any $v\in\mathcal{V}$. For $v=0$, $\mathcal{T}_{i\theta}(v)$ is given by
\begin{equation}\label{eq14}
\begin{aligned}
\mathcal{T}_{i\theta}(v)&=\mathcal{T}_{1\theta}(0)\\
&=\Lambda_{1\theta}(\theta T)-\Lambda_{1(\theta+1)}((\theta +1)T)\\
&=\bigg\{\theta \Big[\sum_{i=1}^{m-1}(\alpha_{\lambda_i}-\alpha_{\lambda_{i+1}})t_{i}+(\alpha_{\lambda_m}-\alpha_{\lambda_1})T\Big]+\alpha_{\lambda_1}(\theta T)\bigg\}\\
&\quad-\bigg\{(\theta+1) \Big[\sum_{i=1}^{m-1}(\alpha_{\lambda_i}-\alpha_{\lambda_{i+1}})t_{i}+(\alpha_{\lambda_m}-\alpha_{\lambda_1})T\Big]\\
&\quad+\alpha_{\lambda_1}[(\theta+1) T]\bigg\}\\
&=-\sum_{i=1}^{m-1}(\alpha_{\lambda_i}-\alpha_{\lambda_{i+1}})t_{i}-\alpha_{\lambda_m}T\\
&=-\sum_{i=1}^{m-1}(\alpha_{\lambda_i}-\alpha_{\lambda_{i+1}})t_{i}-\alpha_{\lambda_m}T+\alpha_{\lambda_1}t_{0}\\
&=-\sum\limits_{i=1}^{m} \alpha_{\lambda_i}(t_{i}-t_{i-1}).
\end{aligned}
\end{equation}

For $v\in\{1,2,\dots, m-2\}$, $\mathcal{T}_{i\theta}(v)$  is given by
\begin{equation}\label{eq15}
\begin{aligned}
\mathcal{T}_{i\theta}(v)&=\mathcal{T}_{(v+1)\theta}(v)\\
&=\Lambda_{(v+1)\theta}(\theta T+t_{v})-\Lambda_{(v+1)(\theta+1)}((\theta +1)T+t_{v})\\
&=\bigg\{ \theta \Big[\sum_{i=1}^{m-1}(\alpha_{\lambda_i}-\alpha_{\lambda_{i+1}})t_{i}+(\alpha_{\lambda_m}-\alpha_{\lambda_1})T\Big]\\
&\quad+\sum_{l=1}^{v}(\alpha_{\lambda_l}-\alpha_{\lambda_{l+1}})( \theta T+t_{l})+\alpha_{\lambda_{v+1}}(\theta T+t_{v})\bigg\}\\
&\quad-\bigg\{(\theta+1) \Big[\sum_{i=1}^{m-1}(\alpha_{\lambda_i}-\alpha_{\lambda_{i+1}})t_{i}+(\alpha_{\lambda_m}-\alpha_{\lambda_1})T\Big]\\
&\quad+\sum_{l=1}^{v}(\alpha_{\lambda_l}-\alpha_{\lambda_{l+1}})\Big[(\theta+1) T+t_{l}\Big]\\
&\quad+\alpha_{\lambda_{v+1}}\Big[(\theta +1)T+t_{v}\Big]\bigg\}\\
&=-\sum_{i=1}^{m-1}(\alpha_{\lambda_i}-\alpha_{\lambda_{i+1}})t_{i}-\alpha_{\lambda_m}T\\
&=-\sum_{i=1}^{m-1}(\alpha_{\lambda_i}-\alpha_{\lambda_{i+1}})t_{i}-\alpha_{\lambda_m}T+\alpha_{\lambda_1}t_{0}\\
&=-\sum\limits_{i=1}^{m} \alpha_{\lambda_i}(t_{i}-t_{i-1}).
\end{aligned}
\end{equation}

For $v=m-1$, $\mathcal{T}_{i\theta}(v)$  is given by
\begin{equation}\label{eq16}
\begin{aligned}
\mathcal{T}_{i\theta}(v)&=\mathcal{T}_{m\theta}(m-1)\\
&=\Lambda_{m\theta}(\theta T+t_{m-1})-\Lambda_{m(\theta+1)}((\theta +1)T+t_{m-1})\\
&=\Big[(\theta +1)\sum_{i=1}^{m-1}(\alpha_{\lambda_i}-\alpha_{\lambda_{i+1}})t_{i}+\alpha_{\lambda_m}(\theta T+t_{m-1})\Big]\\
&\quad-\bigg\{(\theta +2)\sum_{i=1}^{m-1}(\alpha_{\lambda_i}-\alpha_{\lambda_{i+1}})t_{i}\\
&\quad+\alpha_{\lambda_m}\Big[(\theta +1)T+t_{m-1}\Big]\bigg\}\\
&=-\sum_{i=1}^{m-1}(\alpha_{\lambda_i}-\alpha_{\lambda_{i+1}})t_{i}-\alpha_{\lambda_m}T\\
&=-\sum_{i=1}^{m-1}(\alpha_{\lambda_i}-\alpha_{\lambda_{i+1}})t_{i}-\alpha_{\lambda_m}T+\alpha_{\lambda_1}t_{0}\\
&=-\sum\limits_{i=1}^{m} \alpha_{\lambda_i}(t_{i}-t_{i-1}).
\end{aligned}
\end{equation}

Therefore,  for all  $v\in\mathcal{V}$ and  $\theta \in \mathbb{N}$, it holds that
\begin{equation}\label{eq17}
\begin{aligned}
\mathcal{T}_{i\theta}(v)=-\sum\limits_{i=1}^{m} \alpha_{\lambda_i}(t_{i}-t_{i-1}).
\end{aligned}
\end{equation}
 From \eqref{eq17}, one knows that the sign of $\sum\limits_{i=1}^{m} \alpha_{\lambda_i}(t_{i}-t_{i-1})$  is the opposite of
  $\mathcal{G}(v)$ since the sign of $\mathcal{G}(v)$ is equal to that of $\mathcal{T}_{i\theta}(v)$.

Moreover, we discuss the cases of $\eqref{eq9}\Leftrightarrow \eqref{eq10}$ and $\eqref{eq11}\Leftrightarrow \eqref{eq12}$.
We first prove $\eqref{eq9}\Rightarrow \eqref{eq10}$.
If  $\eqref{eq9}$ is satisfied, one can get $\mathcal{G}(v)<0$ and thus $\eqref{eq10}$ holds. We further prove $\eqref{eq9}\Leftarrow \eqref{eq10}$. If $\eqref{eq10}$ holds, then $\mathcal{G}(v)<0$. This leads to $\sum\limits_{i=1}^{m}\alpha_{\lambda_i}(t_{i}-t_{i-1})>0$ and $\eqref{eq9}$ holds. In a word, $\eqref{eq9}\Leftrightarrow \eqref{eq10}$.  Similarly, the case $\eqref{eq11}\Leftrightarrow \eqref{eq12}$ can be proved.
\end{proof}

%Switched system $x(t)$ satisfies $x(\theta T)<x(( \theta +1)T)$ at each period \textcolor{red}{$\bar{\Omega}_\theta$}, $\theta \in \mathbb{N}$.

%I don't think this should be an assumption. Equation (9) is an important condition for the subsequent result. So it should be a theoretical condition under which you will prove that the cooperative equilibrium is asymptotically stable.

%I think this is not very obvious. Perhaps it would be better to formulate this as a proposition or lemma and give the proof. Or at least you could provide some insight about how this is obtained.

Lemma \ref{lemma1} illustrates the relationship between the activation ordering of $m$ subsystems with distinct switching rules $(\alpha_{\lambda_1}, \dots, \alpha_{\lambda_j}, \dots, \alpha_{\lambda_m})$ and the switching instants between two subsystems ${x(\theta T+t_i)}_{\theta\in \mathbb{N}}$. To be specific, under $\eqref{eq9}$, the sequence $\{x(\theta T+t_{i})\}_{\theta\in \mathbb{N}}$ is  monotonically increasing, while under $\eqref{eq11}$,  the sequence $\{x(\theta T+t_{i})\}_{\theta\in \mathbb{N}}$ is monotonically decreasing. Accordingly, we can then respectively discuss the existence and (asymptotic) stability of equilibria corresponding to the cases of $\eqref{eq9}$ and $\eqref{eq11}$, shown in the next subsections.

For simplicity, let $d_{\theta}=\mathop{\inf} \limits_{t\in\bar{\Omega}_\theta}x(t)$ denote the infimum value over the interval $\bar{\Omega}_\theta$ and let $\{d_{\theta}\}_{\theta \in \mathbb{N}}$ denote a sequence of the infimum values. Furthermore, let $y_{\theta}=\mathop{\sup} \limits_{t\in\bar{\Omega}_\theta}x(t)$ denote the supremum value over the interval $\bar{\Omega}_\theta$ and let $\{y_{\theta}\}_{\theta \in \mathbb{N}}$ denote a sequence of the supremum values.
%Furthermore, it can be checked that the sequence $\{d_{\theta}\}_{\theta \in \mathbb{N}}$ is strictly monotonically increasing for all  $\theta \in \mathbb{N}$, that is, $d_{\theta}>d_{k+1}$, $\forall \,\, \theta \in \mathbb{N}$.
\begin{Remark}
\label{Remark3}
 Under $\eqref{eq9}$, the sequence $\{d_{\theta}\}_{\theta \in \mathbb{N}}$ is strictly monotonically increasing for all  $\theta \in \mathbb{N}$, that is, $d_{\theta}>d_{\theta+1}$, $\forall \,\, \theta \in \mathbb{N}$.
 Under $\eqref{eq11}$, the sequence $\{y_{\theta}\}_{\theta \in \mathbb{N}}$ is strictly monotonically decreasing for all  $\theta \in \mathbb{N}$, that is, $y_{\theta}<y_{\theta+1}$, $\forall \,\, \theta \in \mathbb{N}$.
\end{Remark}
%from condition \eqref{eq9}

%\begin{Remark}
%\label{Remark3}
%$\sum\limits_{i=1}^{m-1} (\beta_{i}-\beta_{i+1})t_{i}+\beta_{m}T<0$ only and only if the inequality
%$x(\theta T+t_{v})>x((\theta +1)T+t_{v}), v\in\mathcal{V}, \theta \in \mathbb{N}$ holds.
%\end{Remark}

\subsection{Existence of the Equilibrium Point}

To start, we give the definition on the existence of the equilibrium point.
%\begin{definition}
%\label{definition2}
%(Equilibrium point)
%The point $x=x^*$ is an equilibrium point of \eqref{eq6}  if $x(t)=x^*$ is the solution of \eqref{eq6}  for all $t \ge 0$.
%\end{definition}

%This definition implies that if $x(0)=x_0=x^*$, then $x(t)=x^*$ for all $t>0$. Depending on Definition~\ref{definition2}, the detailed analysis of the equilibrium points is shown in Theorem \ref{theorem1}.
\begin{theorem}
\label{theorem1}
If   \eqref{eq9} or \eqref{eq11} holds (i.e., $\sum\limits_{i=1}^{m} \alpha_{\lambda_i}(t_{i}-t_{i-1})\neq0$),  there always exist two equilibria of the periodically switched system \eqref{eq6}, namely $x^\ast=1$ and $x^\ast=0$.
\end{theorem}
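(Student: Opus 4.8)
The plan is to treat an equilibrium of the switched system \eqref{eq6} as a constant trajectory $x(t)\equiv x^\ast$, which by the form of \eqref{eq3}--\eqref{eq6} requires the right-hand side of \emph{every} active subsystem to vanish at $x^\ast$, i.e. $x^\ast$ must be a common zero of all the $f_{\lambda_i}$. Since each subsystem has the factorized form $\alpha_{\lambda_i}x(1-x)$, I would first verify directly that $x^\ast=0$ and $x^\ast=1$ are such common zeros for every $i\in\mathcal{M}$: indeed $\alpha_{\lambda_i}\cdot 0\cdot(1-0)=0$ and $\alpha_{\lambda_i}\cdot 1\cdot(1-1)=0$. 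Hence, if $x_0=0$ or $x_0=1$, the solution stays put for all $t\ge 0$ no matter which rule is currently active under $\sigma(t)$, so both points are equilibria of \eqref{eq6}. This existence half does not even use the hypothesis and is the easy part of the statement.

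The hypothesis \eqref{eq9} or \eqref{eq11} enters to guarantee that these are the \emph{only} equilibria, i.e. that no interior initial condition $x_0\in(0,1)$ produces a stationary trajectory. For this I would invoke Lemma~\ref{lemma1} with $v=0$. Under \eqref{eq9} it gives $x(\theta T)<x((\theta+1)T)$, and under \eqref{eq11} it gives $x(\theta T)>x((\theta+1)T)$, for every $\theta\in\mathbb{N}$. Taking $\theta=0$ yields $x(0)=x_0\neq x(T)$, so the trajectory launched from any interior point strictly changes its value across one period and cannot be constant. Therefore the equilibrium set of \eqref{eq6} is exactly $\{0,1\}$, as claimed.

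The one point that requires care, and the main conceptual obstacle, is why the hypothesis is stated at all, since existence of $\{0,1\}$ is immediate from the product form of $f_{\lambda_i}$. The answer is the uniqueness content hidden in the word ``two'': in the excluded degenerate case $\sum_{i=1}^{m}\alpha_{\lambda_i}(t_i-t_{i-1})=0$ the quantity $\mathcal{T}_{i\theta}(v)$ appearing in the proof of Lemma~\ref{lemma1} vanishes, whence $\mathcal{G}(v)=0$ and $x(\theta T+t_v)=x((\theta+1)T+t_v)$ for every interior $x_0$; this makes every interior point periodic and destroys uniqueness. Condition \eqref{eq9} or \eqref{eq11} is precisely the nondegeneracy $\sum_{i=1}^{m}\alpha_{\lambda_i}(t_i-t_{i-1})\neq 0$ that rules this out. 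Consequently the substantive work lives in Lemma~\ref{lemma1}, and Theorem~\ref{theorem1} follows as a short corollary combining the direct zero check for $\{0,1\}$ with the strict monotonicity of $\{x(\theta T+t_v)\}_{\theta\in\mathbb{N}}$.
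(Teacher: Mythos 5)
Your proposal is correct, and the two halves split the same way as the paper's proof: the existence of $x^\ast=0$ and $x^\ast=1$ is verified by the same direct zero check of the product form $\alpha_{\lambda_i}x(1-x)$. Where you genuinely diverge is in ruling out interior equilibria. The paper argues locally in time: by Remark~2 the trajectory is strictly monotone on each subinterval $\Omega_{i\theta}$ (which tacitly uses $\alpha_{\lambda_i}\neq 0$ rather than the stated hypothesis), so $x(t)$ cannot be constant. You instead argue globally over one period, invoking Lemma~\ref{lemma1} with $v=0$ to get $x(0)\neq x(T)$ for any $x_0\in(0,1)$ whenever $\sum_{i=1}^{m}\alpha_{\lambda_i}(t_i-t_{i-1})\neq 0$. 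Your route has the virtue of actually using the theorem's stated hypothesis and of tolerating the possibility that some individual $\alpha_{\lambda_i}$ vanishes; the paper's route is more elementary and in fact establishes the conclusion under the weaker requirement that at least one $\alpha_{\lambda_i}\neq 0$. One caveat on your closing paragraph: in the degenerate case $\sum_{i=1}^{m}\alpha_{\lambda_i}(t_i-t_{i-1})=0$, interior trajectories become $T$-periodic but (as long as some $\alpha_{\lambda_i}\neq 0$) they are still non-constant within a period, so they are not equilibria in the sense of constant solutions; the nondegeneracy condition is therefore sufficient but not necessary for the ``exactly two equilibria'' conclusion, and your claim that it ``destroys uniqueness'' overstates its role. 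This does not affect the validity of your proof of the stated implication.
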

\begin{proof}
 Solving  $\dot{x}(t)|_{(\alpha_{\lambda_1}, \alpha_{\lambda_2}, \dots, \alpha_{\lambda_m})}=0$, it is easy to check that  the system has two equilibria on the boundary of the parameter space, namely, $x^\ast=1$ and  $x^\ast=0$. We now proceed to explore whether there is an interior equilibrium point. It is equivalent to considering  that  if $x(0)=x^*\in(0, 1)$, then $x(t)=x^*$ for all $t>0$.  However, when the initial state $x(0)$ is in the subsystem $i\in \mathcal{M}$,  one knows that the trajectory  of $x(t)$ is either monotonically increasing or decreasing  in $\Omega_{i\theta}$. This implies that  $x(t)$ is  not  always constant and therefore not identically  $x^*$. Thus,  the interior equilibrium point does not exist.
\end{proof}

\subsection{Stability of the Equilibrium Point}
Under the condition of \eqref{eq9},  we separately present the stability analysis of the obtained equilibria (i.e., $x^\ast=0$ and $x^\ast=1$) in Theorem \ref{theorem2}, and the asymptotic stability analysis of the equilibrium point  $x^\ast=1$ in Theorem \ref{theorem3}.

\begin{theorem}\label{theorem2}
If \eqref{eq9} holds,   the equilibrium point $x^\ast=0$ of the periodically switched system \eqref{eq6}   is unstable and $x^\ast=1$ is stable.
\end{theorem}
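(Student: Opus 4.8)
The plan is to treat the two claims separately, in both cases exploiting the closed-form trajectory \eqref{eq7} evaluated along the subsequence of period endpoints, together with the monotonicity supplied by Lemma~\ref{lemma1} and Remark~\ref{Remark3}. The common technical ingredient is the telescoping identity already embedded in \eqref{eq14}: since $\mathcal{T}_{1\theta}(0)=\Lambda_{1\theta}(\theta T)-\Lambda_{1(\theta+1)}((\theta+1)T)=-\sum_{i=1}^{m}\alpha_{\lambda_i}(t_i-t_{i-1})$ and $\Lambda_{1,0}(0)=0$, one obtains $\Lambda_{1\theta}(\theta T)=\theta\sum_{i=1}^{m}\alpha_{\lambda_i}(t_i-t_{i-1})$. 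Writing $S=\sum_{i=1}^{m}\alpha_{\lambda_i}(t_i-t_{i-1})$, condition \eqref{eq9} reads $S>0$.

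For the instability of $x^\ast=0$, I would substitute $t=\theta T$ into \eqref{eq7} to get $x(\theta T)=\big(1+\tfrac{1-x_0}{x_0}e^{-\theta S}\big)^{-1}$ for every $x_0\in(0,1)$. Because $S>0$, the factor $e^{-\theta S}$ decreases to $0$, so $\{x(\theta T)\}_{\theta\in\mathbb{N}}$ is strictly increasing (in agreement with Lemma~\ref{lemma1}) and converges to $1$. Consequently, fixing for instance $\epsilon_0=\tfrac12$, no matter how small the initial deviation $x_0>0$ is chosen there is an index $\theta$ with $x(\theta T)>\tfrac12$; the trajectory therefore leaves a fixed neighbourhood of $0$, which is exactly instability of $x^\ast=0$.

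For the stability of $x^\ast=1$ I would reduce the global infimum of the trajectory to a single period. By Remark~\ref{Remark3}, under \eqref{eq9} the sequence of period infima $\{d_\theta\}_{\theta\in\mathbb{N}}$ is monotone, so that $\inf_{t\ge 0}x(t)=d_0=\inf_{t\in\bar{\Omega}_0}x(t)$. On the compact first period the solution is assembled from the finitely many pieces in \eqref{eq6}, each monotone by Remark~\ref{Remark2}; hence $d_0$ equals the minimum of the finitely many values taken by $x$ at the switching instants and endpoints of $\bar{\Omega}_0$, each of which is an explicit continuous function of $x_0$ through \eqref{eq7}. At $x_0=1$ one has $x(t)\equiv1$ and thus $d_0=1$, so $d_0$ depends continuously on $x_0$ with limit $1$ as $x_0\to1^{-}$. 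Given $\epsilon>0$, I would choose $\delta>0$ so that $x_0\in(1-\delta,1]$ forces $d_0>1-\epsilon$; since $x(t)\ge d_0$ for all $t\ge 0$, this yields $|x(t)-1|<\epsilon$ for all $t\ge0$, i.e.\ Lyapunov stability of $x^\ast=1$.

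The routine part is the instability claim, which follows at once from the identity $\Lambda_{1\theta}(\theta T)=\theta S$. The main obstacle is the stability half: one must establish not merely that trajectories approach $1$ (that attractivity statement is reserved for Theorem~\ref{theorem3}) but the \emph{uniform} smallness of the worst-case excursion. The delicate points are (i) justifying that the global infimum is already attained in $\bar{\Omega}_0$ via the monotonicity of $\{d_\theta\}$ in Remark~\ref{Remark3}, and (ii) verifying that the period-infimum $d_0$, obtained as a minimum over the finitely many switching-instant and boundary values, is genuinely continuous in $x_0$ and tends to $1$; care is needed because the infimum may be attained at an interior switching instant rather than at the endpoints of $\bar{\Omega}_0$.
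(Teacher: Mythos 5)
Your proposal is correct, and for the stability half it follows essentially the same route as the paper: invoke Remark~\ref{Remark3} to conclude that the global infimum of the trajectory equals the first-period infimum $d_0$, and then control $1-d_0$ in terms of $1-x_0$. The difference is in how that control is obtained. The paper splits into three cases according to whether the period infimum is attained at $\theta T$, at $\theta T+t_1$, or at $\theta T+t_j$ with $j\in\mathcal{W}$, and in each case solves $1-d_0<\epsilon$ explicitly to produce a concrete $\delta$ (e.g.\ $\delta=1-\bigl(1+\tfrac{\epsilon}{1-\epsilon}e^{\sum_{l=1}^{j}\alpha_{\lambda_l}(t_l-t_{l-1})}\bigr)^{-1}$). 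You instead observe that $d_0$ is the minimum of the finitely many switching-instant values $x(0),x(t_1),\dots,x(t_{m-1})$, each a continuous function of $x_0$ tending to $1$ as $x_0\to1^-$, and choose $\delta$ by continuity; this handles all cases uniformly at the cost of not exhibiting $\delta$ in closed form. Your treatment of the instability of $x^\ast=0$ is genuinely different from the paper's: the paper sets $\epsilon_0=d_0$ and argues that the trajectory stays above its own infimum, whereas you fix a universal $\epsilon_0=\tfrac12$ and use $x(\theta T)=\bigl(1+\tfrac{1-x_0}{x_0}e^{-\theta S}\bigr)^{-1}\to1$ to show that every trajectory starting in $(0,\delta)$ eventually escapes the $\epsilon_0$-neighbourhood of $0$. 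Your version matches the standard negation of Lyapunov stability (a single $\epsilon_0$ working for all $\delta$), so it is the more watertight of the two; the price is that it borrows a convergence fact that the paper only deploys later, in Theorem~\ref{theorem3}, though you derive it self-containedly from \eqref{eq7} and \eqref{eq9}. Both delicate points you flag — that Remark~\ref{Remark3} licenses the reduction to $\bar{\Omega}_0$, and that the infimum may sit at an interior switching instant — are exactly the issues the paper's three-case analysis is designed to address.
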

\begin{proof}
Firstly, we prove that  the equilibrium point $x^\ast=0$ of \eqref{eq6}  is unstable. From Remark  \ref{Remark3},   the fact that under  \eqref{eq9} for all $\theta\in \mathbb{N}$
the sequence $\{d_{\theta}\}_{\theta \in \mathbb{N}}=\{\mathop{\min} \limits_{t\in\bar{\Omega}_\theta}x(t)\}_{\theta \in \mathbb{N}}$ is strictly monotonically increasing  implies $d_{0}=\mathop{\min} \limits_{t \ge 0}x(t)$.
Let $\epsilon_0=d_0$, then for all $\delta>0$,  if $\left|x_0\right|<\delta$, we have $\left|x(t)\right|>\epsilon_0$ for all $t \ge 0$. Therefore,   the equilibrium point $x^\ast=0$ is unstable.

Next, we prove that the equilibrium point $x^\ast=1$ of \eqref{eq6}  is stable from three aspects, where the infimum value of \eqref{eq6}  over the period  $\bar{\Omega}_\theta$ is taken at $\theta T$, $\theta T+t_{1}$, and $\theta T+t_{j}$, that is, $d_{\theta}=x(\theta T)$, $d_{\theta}=x(\theta T+t_{1})$, and $d_{\theta}=x(\theta T+t_{j})$ for $j\in\mathcal{W}$ and $\theta\in \mathbb{N}$. Based on the definition of Lyapunov stability~\cite{Khalil2002}, for any $\epsilon>0$, we shall find $\delta>0$, such that $\left|x(t)-1\right|<\epsilon$ holds for $t\ge 0$.

For $d_{\theta}=x(\theta T)$,
from the definition of $d_{\theta}$ it means that the infimum value of \eqref{eq6}  over the period  $\bar{\Omega}_\theta$ is taken  at $\theta T$.
Then according to Remark \ref{Remark3},  one yields $d_0=\mathop{\inf}\limits_{t\in\bar{\Omega}_\theta}x(t)=\mathop{\inf}\limits_{t\in\bar{\Omega}_0}x(t)=x(0)=x_0=\mathop{\min} \limits_{t \ge 0} x(t)$.  From $x(t)\in[0, 1]$, it follows  that
 \begin{equation}\label{eq18}
\begin{aligned}
\mathop{\max} \limits_{t \ge 0} \left|x(t)-1\right|&=\mathop{\max} \limits_{t \ge 0} \{1-x(t)\}=1-\mathop{\min} \limits_{t \ge 0} x(t)=1-x_0.
\end{aligned}
 \end{equation}
 Thus, if $\mathop{\max} \limits_{t \ge 0} \left|x(t)-1\right|=1-x_0=\left|x_0-1\right|<\epsilon$, then  $\left|x(t)-1\right|<\epsilon$.
This  indicates that for any $\epsilon>0$, there exists a $\delta=\epsilon>0$ such that $\left|x(t)-1\right|< \epsilon$ for all  $t\geq 0$, whenever $\left|x_0-1\right|<\delta$.

For $d_{\theta}=x(\theta T+t_{1})$, the infimum value over the period  $\bar{\Omega}_\theta$ is taken  at $\theta T+t_{1}$. Then according to Remark \ref{Remark3},  one can check that $d_0=\mathop{\inf}\limits_{t\in\bar{\Omega}_\theta}x(t)=\mathop{\inf}\limits_{t\in\bar{\Omega}_0}x(t)=x(t_{1})=\mathop{\min} \limits_{t \ge 0} x(t)$.  From $x(t)\in[0,1]$,  it follows that
 \begin{equation}\label{eq19}
\begin{aligned}
\mathop{\max} \limits_{t \ge 0} \left|x(t)-1\right|&=\mathop{\max} \limits_{t \ge 0} \{1-x(t)\}=1-\mathop{\min} \limits_{t \ge 0} x(t)=1-x(t_{1})\\
&=1-\frac{1}{1+\frac{1-x_0}{x_0}e^{-\alpha_{\lambda_1}t_{1}}}.
\end{aligned}
\end{equation}
 After calculation of $\mathop{\max} \limits_{t \ge 0} \left|x(t)-1\right|<\epsilon$, one derives that $1-x_0=\left|x_0-1\right|<1-\frac{1}{1+\frac{\epsilon}{1-\epsilon}e^{\alpha_{\lambda_1}t_{1}}}$. Thus, if $\mathop{\max} \limits_{t \ge 0} \left|x(t)-1\right|<\epsilon$, then $\left|x(t)-1\right|<\epsilon$.  This indicates that for any $\epsilon>0$, there exists a $\delta=1-\frac{1}{1+\frac{\epsilon}{1-\epsilon}e^{\alpha_{\lambda_1}t_{1}}}>0$ such that $\left|x(t)-1\right|<\epsilon, \forall \,\, t\geq 0$, whenever $\left|x_0-1\right|<\delta$.

For $d_{\theta}=x(\theta T+t_{j})$,  one knows that the infimum value over the period  $\bar{\Omega}_\theta$ is taken  at $\theta T+t_{j}$. Then according to Remark \ref{Remark3},
 one can check that $d_0=\mathop{\inf}\limits_{t\in\bar{\Omega}_\theta}x(t)=\mathop{\inf}\limits_{t\in\bar{\Omega}_0}x(t)=x(t_{j})=\mathop{\min} \limits_{t \ge 0} x(t)$.  It  follows from $x(t)\in[0,1]$ that
 \begin{equation}\label{eq20}
\begin{aligned}
\mathop{\max}  \limits_{t \ge 0}\left|x(t)-1\right|
&=\mathop{\max} \limits_{t \ge 0}\{1-x(t)\}=1-\mathop{\min} \limits_{t \ge 0}{x}(t)=1-x(t_{j})\\
&=1-\frac{1}{1+\frac{1-x_0}{x_0}e^{-\sum\limits_{l=1}^{j-1}(\alpha_{\lambda_l}-\alpha_{\lambda_{l+1}})t_{l}-\alpha_{\lambda_j}t_j}}\\
&=1-\frac{1}{1+\frac{1-x_0}{x_0}e^{-\sum\limits_{l=1}^{j}\alpha_{\lambda_l}(t_{l}-t_{l-1})}}.
\end{aligned}
\end{equation} After calculation of $\mathop{\max} \limits_{t \ge 0}\left|x(t)-1\right|<\epsilon$,  one can get that
$1-x_0=\left|x_0-1\right|<1-\frac{1}{1+\frac{\epsilon}{1-\epsilon}e^{\sum\limits_{l=1}^{j}\alpha_{\lambda_l}(t_{l}-t_{l-1})}}$. Therefore, if $\mathop{\max}  \limits_{t \ge 0} \left|x(t)-1\right|<\epsilon$, then $\left|x(t)-1\right|<\epsilon$.  This means that for any $\epsilon>0$, there exists a $\delta=1-\frac{1}{1+\frac{\epsilon}{1-\epsilon}e^{\sum\limits_{l=1}^{j}\alpha_{\lambda_l}(t_{l}-t_{l-1})}}>0$ such that $\left|x(t)-1\right|< \epsilon, \forall \,\, t\geq 0$, whenever $\left|x_0-1\right|<\delta$.

Thus, we have proven that the equilibrium point $x^*=1$ of \eqref{eq6}  is stable.
\end{proof}

\begin{theorem}\label{theorem3}
If \eqref{eq9} holds, the equilibrium point $x^*=1$  of the periodically switched system \eqref{eq6} is asymptotically stable.
\end{theorem}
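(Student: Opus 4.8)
The plan is to upgrade the Lyapunov stability already established in Theorem \ref{theorem2} to asymptotic stability by proving the additional \emph{attractivity} property: every trajectory of \eqref{eq6} with $x_0\in(0,1)$ satisfies $x(t)\to 1$ as $t\to\infty$. Since stability has already been secured under \eqref{eq9}, it suffices to establish this limit and then invoke the definition of asymptotic stability.

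First, I would exploit the explicit solution \eqref{eq7} together with the key identity \eqref{eq17}, which asserts that $\mathcal{T}_{i\theta}(v)=-\sum_{i=1}^{m}\alpha_{\lambda_i}(t_i-t_{i-1})$ \emph{independently} of $\theta$ and $v$. Writing $S:=\sum_{i=1}^{m}\alpha_{\lambda_i}(t_i-t_{i-1})>0$ by \eqref{eq9}, the relation $\mathcal{T}_{i\theta}(v)=\Lambda_{i\theta}(\theta T+t_v)-\Lambda_{i(\theta+1)}((\theta+1)T+t_v)=-S$ is a telescoping recursion, from which $\Lambda_{(v+1)\theta}(\theta T+t_v)=\Lambda_{(v+1)0}(t_v)+\theta S$ at each switching instant. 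Because $S>0$, this quantity grows without bound as $\theta\to\infty$, and substituting into \eqref{eq7} yields $x(\theta T+t_v)\to 1$ for every $v\in\mathcal{V}$.

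Next I would transfer this convergence of the finitely many switching-point values to the per-period infimum. By Remark \ref{Remark2} the trajectory is monotone on each subinterval $\Omega_{i\theta}$, so the infimum $d_\theta=\inf_{t\in\bar{\Omega}_\theta}x(t)$ is attained at one of the switching instants, i.e.\ $d_\theta=\min_{v}x(\theta T+t_v)$, a minimum over a finite index set. Since each of these finitely many sequences tends to $1$, so does their pointwise minimum, giving $d_\theta\to 1$. As $d_\theta\le x(t)\le 1$ for all $t\in\bar{\Omega}_\theta$ (recall $x(t)\in[0,1]$, and $\{d_\theta\}$ is increasing by Remark \ref{Remark3}), a squeeze argument yields $x(t)\to 1$ as $t\to\infty$. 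Combined with the stability from Theorem \ref{theorem2}, this establishes the asymptotic stability of $x^\ast=1$.

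The main technical obstacle I anticipate is precisely this second step: rigorously justifying that the per-period infimum is realized at a switching instant, and then controlling it uniformly so that convergence of the discrete switching-point sequences upgrades to convergence of the full continuous trajectory. The monotonicity-per-subinterval fact (Remark \ref{Remark2}), together with the monotone and bounded nature of $\{d_\theta\}$ (Remark \ref{Remark3}), is exactly what makes the squeeze legitimate; the remaining part is the routine linear-growth computation of $\Lambda$ at the switching points via \eqref{eq17}.
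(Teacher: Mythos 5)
Your proposal is correct and follows essentially the same route as the paper: evaluate the explicit solution \eqref{eq7} at the switching instants, use \eqref{eq9} to show the exponent grows linearly in $\theta$ so that $x(\theta T+t_v)\to 1$ for each $v$, and then transfer this to the full trajectory via the per-period infimum and the monotonicity facts in Remarks \ref{Remark2} and \ref{Remark3}. Your telescoping use of \eqref{eq17} and the finite-minimum/squeeze phrasing of the last step are only presentational variants of the paper's case-by-case computation.
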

\begin{proof}
In accordance with the stability analysis of $x^\ast=1$ in Theorem \ref{theorem2}, we further consider the asymptotic stability of \eqref{eq6}  from three cases, including $d_{\theta}=x(\theta T)$, $d_{\theta}=x(\theta T+t_{1})$, and $d_{\theta}=x(\theta T+t_{j})$ for all $j\in\mathcal{W}$ and $\theta\in \mathbb{N}$.
To prove that $\mathop{\lim}\limits_{t\rightarrow+\infty}x(t)=1$ holds,  for any $\epsilon>0$,
we should search for a $\hat{t}>0$ such that  $\left|x(t)-1\right|<\epsilon$ for all $t>\hat{t}$.
%Since
%there always exists an integer $\Theta\in\mathbb{N_{+}}$ such that $\hat{t}\in\textcolor{red}{\bar{\Omega}_\Theta}$, since

For $d_{\theta}=x(\theta T)$, under  \eqref{eq9}  one gets
\begin{equation}\label{eq21}
\begin{aligned}
 \mathop{\lim}\limits_{\theta\rightarrow+\infty}d_{\theta}&=\mathop{\lim}\limits_{\theta\rightarrow+\infty}\mathop{\inf} \limits_{t\in\bar{\Omega}_\theta}x(t)=\mathop{\lim}\limits_{\theta\rightarrow+\infty}x(\theta T)\\
 &=\mathop{\lim}\limits_{\theta\rightarrow+\infty} \frac{1}{1+\frac{1-x_0}{x_0}e^{-\Lambda_{1\theta}(\theta T)}}\\
 &=\mathop{\lim}\limits_{\theta\rightarrow+\infty} \frac{1}{1+\frac{1-x_0}{x_0}e^{-\theta\Big[\sum\limits_{i=1}^{m-1}(\alpha_{\lambda_i}-\alpha_{\lambda_{i+1}})t_{i}+\alpha_{\lambda_m}T\Big]}}\\
  &=\mathop{\lim}\limits_{\theta\rightarrow+\infty} \frac{1}{1+\frac{1-x_0}{x_0}e^{-\theta\sum\limits_{i=1}^{m} \alpha_{\lambda_i}(t_{i}-t_{i-1})}}=1.
\end{aligned}
\end{equation}

For $d_{\theta}=x(\theta T+t_{1})$,  under \eqref{eq9}  one gets
 \begin{equation}\label{eq22}
\begin{aligned}
 \mathop{\lim}\limits_{\theta\rightarrow+\infty}d_{\theta}&=\mathop{\lim}\limits_{\theta\rightarrow+\infty}\mathop{\inf} \limits_{t\in\bar{\Omega}_\theta}x(t)=\mathop{\lim}\limits_{\theta\rightarrow+\infty} x(\theta T+t_{1})\\
 &=\mathop{\lim}\limits_{\theta\rightarrow+\infty} \frac{1}{1+\frac{1-x_0}{x_0}e^{-\Lambda_{2\theta}(\theta T+t_{1})}}\\
 &=\mathop{\lim}\limits_{\theta\rightarrow+\infty} \frac{1}{1+\frac{1-x_0}{x_0}e^{-\theta\Big[\sum\limits_{i=1}^{m-1}(\alpha_{\lambda_i}-\alpha_{\lambda_{i+1}})t_{i}+\beta_{m}T\Big]-\alpha_{\lambda_1}t_{1}}}\\
  &=\mathop{\lim}\limits_{\theta\rightarrow+\infty} \frac{1}{1+\frac{1-x_0}{x_0}e^{-\theta\sum\limits_{i=1}^{m} \alpha_{\lambda_i}(t_{i}-t_{i-1})-\alpha_{\lambda_1}t_{1}}}=1.
\end{aligned}
\end{equation}

For $d_{\theta}=x(\theta T+t_{j})$, $j\in\mathcal{W}$,  under  \eqref{eq9}   one gets
 \begin{equation}\label{eq23}
\begin{aligned}
\mathop{\lim}\limits_{\theta\rightarrow+\infty}d_{\theta}&=\mathop{\lim}\limits_{\theta\rightarrow+\infty}\mathop{\inf} \limits_{t\in\bar{\Omega}_\theta}x(t)=\mathop{\lim}\limits_{\theta\rightarrow+\infty} x(\theta T+t_{j})\\
 &=\mathop{\lim}\limits_{\theta\rightarrow+\infty} \frac{1}{1+\frac{1-x_0}{x_0}e^{-\Lambda_{(j+1)\theta }(\theta T+t_{j})}}\\
 &=\mathop{\lim}\limits_{\theta\rightarrow+\infty} \frac{1}{1+\frac{1-x_0}{x_0}e^{-\theta\Big[\sum\limits_{i=1}^{m-1}(\alpha_{\lambda_i}-\alpha_{\lambda_{i+1}})t_{i}+\alpha_{\lambda_m}T\Big]-\beta_{j}t_{j}}}\\
  &=\mathop{\lim}\limits_{\theta\rightarrow+\infty} \frac{1}{1+\frac{1-x_0}{x_0}e^{-\theta\sum\limits_{i=1}^{m} \alpha_{\lambda_i}(t_{i}-t_{i-1})-\alpha_{\lambda_j}t_{j}}}=1.
\end{aligned}
\end{equation}
Consequently,  from \eqref{eq22}-\eqref{eq24}, it follows that for  $v\in\mathcal{V}$,
 $
\mathop{\lim}\limits_{\theta\rightarrow+\infty}d_{\theta}= \mathop{\lim}\limits_{\theta\rightarrow+\infty}x(\theta T+t_{v})=1.
$
One can see that  the limit of the sequence $\{d_{\theta}\}_{\theta\in \mathbb{N}}$ is 1. That is, $\forall \,\, \epsilon>0$, ${\exists} \,\, \Theta>0$, s.t. $\theta>\Theta \Rightarrow \left|d_{\theta}-1\right|=\left|x(\theta T+t_{v})-1\right|<\epsilon$. In addition, taking $\hat{t}=\Theta T +t_{v} \in\bar{\Omega}_\Theta$, then $\left|x(t)-1\right|<\epsilon$ for $t \ge \hat{t}$, which means $\mathop{\lim}\limits_{t\rightarrow+\infty}x(t)=1$.  Therefore, we have proven that the equilibrium point $x^*=1$ of \eqref{eq6}  is asymptotically stable.
\end{proof}

%\begin{theorem}\label{theorem2}
%If \eqref{eq9} holds (i.e., $\sum\limits_{i=1}^{m-1} (\beta_{i}-\beta_{i+1})t_{i}+\beta_{m}T>0$),  the following statements hold:
%\begin{enumerate}
% \item There always exist two equilibria of periodically switched system \eqref{eq6}, namely $x^\ast=1$ and $x^\ast=0$;
% \item The equilibrium point $x^\ast=1$ is stable and $x^\ast=0$ is unstable;
% \item  If $x_0 \neq 0$, then $x(t)$ converges 1 as $t$ goes to infinity, that is,  the equilibrium point $x^*=1$ of the periodically switched system \eqref{eq6}  is asymptotically stable.
 %  \end{enumerate}
%\end{theorem}

%Now we are ready to prove Theorem \ref{theorem1} as follows.
%\begin{proof}
%  Combining the Theorem \ref{theorem1}, Lemma \ref{lemma2} and  Lemma \ref{lemma3}, we have under \eqref{eq9} that
 % the periodically switched system \eqref{eq6} has two equilibrium points, $x^*=0$ and $x^*=1$. The equilibrium point
 %$x^*=0$ is unstable, but  $x^*=1$ are asymptotically stable no matter whether the minimum point is on the boundary or inside each period $\bar{\Omega}_\theta$.  Hence, the above theorem is obviously valid.
%\end{proof}

 Combining Theorems \ref{theorem1}-\ref{theorem3},  under \eqref{eq9} one can conclude that
 the periodically switched system \eqref{eq6} has two equilibrium points, $x^*=0$ and $x^*=1$. The equilibrium point
$x^*=0$ is unstable, but  $x^*=1$ is asymptotically stable no matter whether  the infimum value of \eqref{eq6} is taken on the boundary or inside each period $\bar{\Omega}_\theta$.
 For any given activation sequence $(\alpha_{\lambda_1}, \dots, \alpha_{\lambda_j}, \dots, \alpha_{\lambda_m})$,  Theorem~\ref{theorem3}  implies that under \eqref{eq9},  the periodically switched system \eqref{eq6} can evolve into the full cooperation state. Therefore, all periodically switched systems in the set $\Delta$ can reach the full cooperation state  when different strategy update rules are reasonably arranged  and periodically varying.\\

%We divide the proof of Theorem \ref{theorem1}  into three lemmas respectively corresponding to above three statements. In what follows, we  begin by showing the existence analysis of equilibria of the periodically switched system \eqref{eq6}  in Lemma 1,  provide the stability analysis  of equilibria  in Lemma 2, and prove the periodical stability  for all activation sequence in \eqref{eq8} in Lemma 3.\\ in the set $\Delta$

Moving to the alternative case of \eqref{eq11}, we now analyze the stability of the obtained equilibria  (i.e., $x^\ast=0$ and $x^\ast=1$)  in Theorem~\ref{theorem4}, and further discuss  the asymptotic stability of the equilibrium point $x^\ast=0$ in  Theorem \ref{theorem5}, respectively.
\begin{theorem}\label{theorem4}
If \eqref{eq11} holds,   the equilibrium point  $x^\ast=0$ is stable and $x^\ast=1$ is unstable.
\end{theorem}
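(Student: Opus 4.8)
The plan is to exploit the complete symmetry between this statement and Theorem~\ref{theorem2}, with the roles of the two equilibria interchanged. Under \eqref{eq11} we have $\sum_{i=1}^{m}\alpha_{\lambda_i}(t_i-t_{i-1})<0$, so by Lemma~\ref{lemma1} the sampled sequence $\{x(\theta T+t_v)\}_{\theta\in\mathbb{N}}$ is strictly decreasing for every $v\in\mathcal{V}$, and by Remark~\ref{Remark3} the sequence of supremum values $\{y_\theta\}_{\theta\in\mathbb{N}}$ is strictly decreasing. Consequently the global supremum over $[0,\infty)$ is attained in the very first period, i.e. $y_0=\sup_{t\ge0}x(t)=\max_{t\ge0}x(t)$. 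This structural fact replaces the monotone-increasing-infimum observation used in the proof of Theorem~\ref{theorem2}.

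First I would establish instability of $x^\ast=1$. Since $x_0\in(0,1)$ and $y_0=\max_{t\ge0}x(t)<1$, I set $\epsilon_0=1-y_0>0$; then $|x(t)-1|=1-x(t)\ge1-y_0=\epsilon_0$ for all $t\ge0$, so the trajectory, although started arbitrarily close to $1$, never re-enters the $\epsilon_0$-neighborhood of $1$. This is exactly the mirror image of the instability argument for $x^\ast=0$ given in Theorem~\ref{theorem2}, now using the decreasing supremum in place of the increasing infimum.

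Next I would prove stability of $x^\ast=0$ by a case analysis on where the supremum $y_0$ is attained inside $\bar{\Omega}_0$, paralleling the three cases of Theorem~\ref{theorem2}: $y_\theta=x(\theta T)$ (boundary), $y_\theta=x(\theta T+t_1)$, and $y_\theta=x(\theta T+t_j)$ for $j\in\mathcal{W}$. By Remark~\ref{Remark2} each subsystem is monotone, so the supremum over a period can only occur at a switching instant or at the boundary, which justifies the exhaustive list. In each case I would write $\max_{t\ge0}|x(t)|=\max_{t\ge0}x(t)=y_0$, substitute the closed-form logistic expression \eqref{eq7} for $x(t_1)$ or $x(t_j)$, and then solve $y_0<\epsilon$ for $x_0$ to read off an explicit $\delta(\epsilon)>0$ such that $|x_0|<\delta$ forces $|x(t)|<\epsilon$ for all $t\ge0$. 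For the boundary case $\delta=\epsilon$ works; for the two switching-instant cases $\delta$ is obtained by inverting the logistic map exactly as in \eqref{eq19}--\eqref{eq20}.

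I expect the only genuinely delicate point to be the bookkeeping of the exponents $\sum_{l=1}^{j}\alpha_{\lambda_l}(t_l-t_{l-1})$ appearing in the logistic formula and the verification that the inequality $y_0<\epsilon$ indeed inverts to a strictly positive $\delta$; the remainder is a mechanical sign-flip of Theorem~\ref{theorem2}. In particular, because \eqref{eq11} controls only the full-period sum and says nothing about the signs of the partial sums $\sum_{l=1}^{j}\alpha_{\lambda_l}(t_l-t_{l-1})$, some care is needed to confirm that the candidate $\delta$ is well defined in each case. Once the supremum is correctly localized via Remark~\ref{Remark3}, no estimate beyond those already used in Theorem~\ref{theorem2} is required.
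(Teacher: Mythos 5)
Your proposal follows the paper's proof essentially step for step: the same three-case localization of the supremum ($y_\theta=x(\theta T)$, $y_\theta=x(\theta T+t_1)$, $y_\theta=x(\theta T+t_j)$ for $j\in\mathcal{W}$) combined with the logistic-inversion choice of $\delta(\epsilon)$ for the stability of $x^\ast=0$, and the mirror-image decreasing-supremum argument for the instability of $x^\ast=1$. Your rendering of the instability step, taking $\epsilon_0=1-y_0$ with $y_0=\max_{t\ge 0}x(t)$, is in fact a cleaner statement of what the paper writes (the paper's text has $y_1=\max_{t\ge0}x(t)$ and $\epsilon_0=y_1$, which appears to be a typo), so no substantive difference in approach.
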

\begin{proof}
Firstly, we prove that the equilibrium point $x^\ast=0$ of \eqref{eq6}  is stable from three cases, that is,  the supremum value of system $x(t)$ over the period  $\bar{\Omega}_\theta$ is taken at $\theta T$, $\theta T+t_{1}$, and $\theta T+t_{j}$, that is, $y_{\theta}=x(\theta T)$, $y_{\theta}=x(\theta T+t_{1})$, and $y_{\theta}=x(\theta T+t_{j})$ for $j\in\mathcal{W}$ and $\theta\in \mathbb{N}$.  Based on the definition of Lyapunov stability~\cite{Khalil2002}, we shall find $\delta>0$, such that  $\left|x(t)\right|<\epsilon$ holds for $t\ge 0$.

For $y_{\theta}=x(\theta T)$, the supremum value  of $x(t)$ over the period  $\bar{\Omega}_\theta$ is taken at $\theta T$. From  Remark \ref{Remark3}, for all $\theta\in \mathbb{N}$ the sequence $\{y_{\theta}\}_{\theta\in \mathbb{N}}=\{\mathop{\sup} \limits_{t\in\bar{\Omega}_\theta}x(t)\}_{\theta\in \mathbb{N}}$ is strictly monotonically decreasing, which implies that
 \begin{equation}\label{eq24}
\begin{aligned}
y_{0}=\mathop{\sup} \limits_{t\in\bar{\Omega}_0}\left|x(t)\right|=x_0=\mathop{\max}  \limits_{t \ge 0} \left|x(t)\right|.
\end{aligned}
 \end{equation}
Thus, if $\mathop{\max} \limits_{t \ge 0} \left|x(t)\right|=x_0<\epsilon$, then  $\left|x(t)\right|<\epsilon$. This means  that for any $\epsilon>0$, there exists a $\delta=\epsilon>0$ such that $\left|x(t)\right|< \epsilon$ for all  $t\geq 0$, whenever $\left|x_0\right|<\delta$.

For $y_{\theta}=x(\theta T+t_{1})$, the supremum value  over the period  $\bar{\Omega}_\theta$ is taken at $\theta T+t_{1}$. Then due to Remark \ref{Remark3},  one can check that
 \begin{equation}\label{eq25}
\begin{aligned}
y_{0}&=\mathop{\sup} \limits_{t\in\bar{\Omega}_0}\left|x(t)\right|=x(t_{1})=\mathop{\max}  \limits_{t \ge 0} \left|x(t)\right|\\
&=\frac{1}{1+\frac{1-x_0}{x_0}e^{-\alpha_{\lambda_1}t_{1}}}.
\end{aligned}
\end{equation}
 After calculation of $\max \limits_{t \ge 0} \left|x(t)\right|<\epsilon$, one can derive that $\left|x_0\right|<\frac{1}{1+\frac{1-\epsilon}{\epsilon}e^{\alpha_{\lambda_1}t_{1}}}$. Thus, if $\max \limits_{t \ge 0} \left|x(t)\right|<\epsilon$, then $\left|x(t)\right|<\epsilon$.  This indicates that for any $\epsilon>0$, there exists a $\delta=\frac{1}{1+\frac{1-\epsilon}{\epsilon}e^{\alpha_{\lambda_1}t_{1}}}>0$ such that $\left|x(t)\right|<\epsilon, \forall \,\, t\geq 0$, whenever $\left|x_0\right|<\delta$.

For $y_{\theta}=x(\theta T+t_{j})$,  one knows that the supremum value  over the period  $\bar{\Omega}_\theta$ is taken at $\theta T+t_{j}$. Then according to Remark \ref{Remark3},  one gets
 \begin{equation}\label{eq26}
\begin{aligned}
y_{0}&=\mathop{\sup} \limits_{t\in\bar{\Omega}_0}\left|x(t)\right|=x(t_{j})=\mathop{\max}  \limits_{t \ge 0} \left|x(t)\right|\\
&=\frac{1}{1+\frac{1-x_0}{x_0}e^{-\sum\limits_{l=1}^{j-1}(\alpha_{\lambda_l}-\alpha_{\lambda_{l+1}})t_{l}-\alpha_{\lambda_j}t_j}}\\
&=\frac{1}{1+\frac{1-x_0}{x_0}e^{-\sum\limits_{l=1}^{j}\alpha_{\lambda_l}(t_{l}-t_{l-1})}}.
\end{aligned}
\end{equation} After calculation of $\max \limits_{t \ge 0}\left|x(t)\right|<\epsilon$,  one can get that
$\left|x_0\right|<\frac{1}{1+\frac{1-\epsilon}{\epsilon}e^{\sum\limits_{l=1}^{j}\alpha_{\lambda_l}(t_{l}-t_{l-1})}}$. Thus, if $\max \limits_{t \ge 0} \left|x(t)-1\right|<\epsilon$, then $\left|x(t)-1\right|<\epsilon$.  This means that for any $\epsilon>0$, there exists a $\delta=\frac{1}{1+\frac{1-\epsilon}{\epsilon}e^{\sum\limits_{l=1}^{j}\alpha_{\lambda_l}(t_{l}-t_{l-1})}}>0$ such that $\left|x(t)\right|< \epsilon, \forall \,\, t\geq 0$, whenever $\left|x_0\right|<\delta$.
Hence, the equilibrium point $x^*=0$ of \eqref{eq6}  is stable.

On the other hand, we prove that the equilibrium point $x^\ast=1$ of \eqref{eq6}  is unstable.  According to Remark \ref{Remark2}, one can  get $y_{1}=\max\limits_{t \ge 0} x(t)$. Let $\epsilon_0=y_{1}\in\bar{\Omega}_1$,  then for all $\delta>0$, if $\left|x_0-1\right|<\delta$, we have $\left|x(t)-1\right|>\epsilon_0$ for $t \ge 0$. Therefore,  the equilibrium point $x^\ast=1$ is unstable.
\end{proof}

\begin{theorem}\label{theorem5}
If \eqref{eq11} holds, the equilibrium point $x^*=0$ of the periodically switched system \eqref{eq6}  is asymptotically stable.
\end{theorem}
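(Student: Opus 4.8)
The plan is to mirror the argument of Theorem~\ref{theorem3} under the dual sign condition, replacing the infimum/distance-to-$1$ analysis by a supremum/distance-to-$0$ analysis. Since Theorem~\ref{theorem4} has already established Lyapunov stability of $x^\ast=0$ under \eqref{eq11}, it remains only to prove attractivity, i.e. $\lim_{t\to+\infty}x(t)=0$; combining the two then gives asymptotic stability.

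First I would recall from Remark~\ref{Remark2} that each subsystem is monotone on its activation interval, so the supremum of $x$ over a period $\bar{\Omega}_\theta$ is attained at one of the switching instants $\theta T+t_v$, $v\in\mathcal{V}$, and from Remark~\ref{Remark3} that under \eqref{eq11} the supremum sequence $\{y_\theta\}_{\theta\in\mathbb{N}}$ is monotone and driven toward the equilibrium. I would then split into the same three cases used in Theorem~\ref{theorem3}, namely $y_\theta=x(\theta T)$, $y_\theta=x(\theta T+t_1)$, and $y_\theta=x(\theta T+t_j)$ for $j\in\mathcal{W}$ and $\theta\in\mathbb{N}$.

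In each case I would substitute the closed-form solution \eqref{eq7} together with the corresponding $\Lambda_{i\theta}$, exactly as in the displays \eqref{eq21}--\eqref{eq23}. The key observation, already isolated in \eqref{eq17}, is that the exponent carries a dominant term linear in $\theta$ equal to $\theta\sum_{i=1}^{m}\alpha_{\lambda_i}(t_i-t_{i-1})$, with the remaining terms bounded in $\theta$. Under \eqref{eq11} this coefficient is strictly negative, so $e^{-\Lambda}\to+\infty$ as $\theta\to+\infty$, and hence in every case $\lim_{\theta\to+\infty}y_\theta=\lim_{\theta\to+\infty}x(\theta T+t_v)=0$. Consequently the supremum sequence $\{y_\theta\}_{\theta\in\mathbb{N}}$ converges to $0$: for any $\epsilon>0$ there exists $\Theta>0$ such that $y_\theta<\epsilon$ for all $\theta>\Theta$. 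Taking $\hat{t}=(\Theta+1)T$, for every $t\ge\hat{t}$ we have $t\in\bar{\Omega}_\theta$ for some $\theta>\Theta$, whence $|x(t)|=x(t)\le y_\theta<\epsilon$; therefore $\lim_{t\to+\infty}x(t)=0$, and together with the Lyapunov stability from Theorem~\ref{theorem4} this establishes asymptotic stability of $x^\ast=0$.

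The main obstacle I expect is the bookkeeping needed to confirm, in each of the three cases, that the supremum over the period is genuinely located at the claimed switching instant $\theta T+t_v$ rather than elsewhere, since this depends on how the monotonicity directions of the individual subsystems (Remark~\ref{Remark2}) interlock with the ordering of the $\alpha_{\lambda_i}$. The accompanying task is to track the sign of the exponent carefully so that, across all three cases, the dominant $\theta$-term collapses uniformly to $\theta\sum_{i=1}^{m}\alpha_{\lambda_i}(t_i-t_{i-1})$; once that term is correctly identified, the limit computation itself is routine.
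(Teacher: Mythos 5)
Your proposal follows essentially the same route as the paper's own proof: it splits into the same three cases for where the supremum $y_\theta$ is attained, substitutes the closed-form solution \eqref{eq7} to show the dominant exponent term is $\theta\sum_{i=1}^{m}\alpha_{\lambda_i}(t_i-t_{i-1})<0$ so that $y_\theta\to 0$, and then passes from the sequence limit to $\lim_{t\to+\infty}x(t)=0$ via $x(t)\le y_\theta$ on $\bar{\Omega}_\theta$, combining with the stability from Theorem~\ref{theorem4}. The argument is correct and matches the paper's.
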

\begin{proof}
Contingent on the stability analysis of $x^\ast=0$ in Theorem \ref{theorem4}, we can consider the asymptotic stability of \eqref{eq6}  from three cases, including $y_{\theta}=x(\theta T)$, $y_{\theta}=x(\theta T+t_{1})$, and $y_{\theta}=x(\theta T+t_{j})$ for $j\in\mathcal{W}$ and $\theta\in \mathbb{N}$.
To prove that $\mathop{\lim}\limits_{t\rightarrow+\infty}x(t)=0$ holds,  for any $\epsilon>0$,
we should search for a $\hat{t}>0$ such that  $\left|x(t)\right|<\epsilon$ for all $t>\hat{t}$.
%Since
%there always exists an integer $\Theta\in\mathbb{N_{+}}$ such that $\hat{t}\in\textcolor{red}{\bar{\Omega}_\Theta}$, since

For $y_{\theta}=x(\theta T)$, one can check under  \eqref{eq11}  that
\begin{equation}\label{eq27}
\begin{aligned}
 \mathop{\lim}\limits_{\theta\rightarrow+\infty}y_{\theta}&=\mathop{\lim}\limits_{\theta\rightarrow+\infty}\mathop{\sup} \limits_{t\in\bar{\Omega}_\theta}x(t)=\mathop{\lim}\limits_{\theta\rightarrow+\infty}x(\theta T)\\
 &=\mathop{\lim}\limits_{\theta\rightarrow+\infty} \frac{1}{1+\frac{1-x_0}{x_0}e^{-\Lambda_{1\theta}(\theta T)}}\\
 &=\mathop{\lim}\limits_{\theta\rightarrow+\infty} \frac{1}{1+\frac{1-x_0}{x_0}e^{-\theta\Big[\sum\limits_{i=1}^{m-1}(\alpha_{\lambda_i}-\alpha_{\lambda_{i+1}})t_{i}+\beta_{m}T\Big]}}\\
  &=\mathop{\lim}\limits_{\theta\rightarrow+\infty} \frac{1}{1+\frac{1-x_0}{x_0}e^{-\theta\sum\limits_{i=1}^{m} \alpha_{\lambda_i}(t_{i}-t_{i-1})}}=0.
\end{aligned}
\end{equation}

For $y_{\theta}=x(\theta T+t_{1})$,  one can check under  \eqref{eq11}  that
 \begin{equation}\label{eq28}
\begin{aligned}
 \mathop{\lim}\limits_{\theta\rightarrow+\infty}y_{\theta}&=\mathop{\lim}\limits_{\theta\rightarrow+\infty}\mathop{\sup} \limits_{t\in\bar{\Omega}_\theta}x(t)=\mathop{\lim}\limits_{\theta\rightarrow+\infty} x(\theta T+t_{1})\\
 &=\mathop{\lim}\limits_{\theta\rightarrow+\infty} \frac{1}{1+\frac{1-x_0}{x_0}e^{\Lambda_{2\theta}(\theta T+t_{1})}}\\
 &=\mathop{\lim}\limits_{\theta\rightarrow+\infty} \frac{1}{1+\frac{1-x_0}{x_0}e^{\theta\Big[\sum\limits_{i=1}^{m-1}(\alpha_{\lambda_i}-\alpha_{\lambda_{i+1}})t_{i}+\beta_{m}T\Big]-\alpha_{\lambda_1}t_{1}}}\\
  &=\mathop{\lim}\limits_{\theta\rightarrow+\infty} \frac{1}{1+\frac{1-x_0}{x_0}e^{-\theta\sum\limits_{i=1}^{m} \alpha_{\lambda_i}(t_{i}-t_{i-1})-\alpha_{\lambda_1}t_{1}}}=0.
\end{aligned}
\end{equation}

For $y_{\theta}=x(\theta T+t_{j})$, $j\in\mathcal{W}$,  one can check under  \eqref{eq11}  that
 \begin{equation}\label{eq29}
\begin{aligned}
\mathop{\lim}\limits_{\theta\rightarrow+\infty}y_{\theta}&=\mathop{\lim}\limits_{\theta\rightarrow+\infty}\mathop{\sup} \limits_{t\in\bar{\Omega}_\theta}x(t)=\mathop{\lim}\limits_{\theta\rightarrow+\infty} x(\theta T+t_{j})\\
 &=\mathop{\lim}\limits_{\theta\rightarrow+\infty} \frac{1}{1+\frac{1-x_0}{x_0}e^{-\Lambda_{(j+1)\theta }(\theta T+t_{j})}}\\
 &=\mathop{\lim}\limits_{\theta\rightarrow+\infty} \frac{1}{1+\frac{1-x_0}{x_0}e^{-\theta\Big[\sum\limits_{i=1}^{m-1}(\alpha_{\lambda_i}-\alpha_{\lambda_{i+1}})t_{i}+\beta_{m}T\Big]-\alpha_{\lambda_j}t_{j}}}\\
  &=\mathop{\lim}\limits_{\theta\rightarrow+\infty} \frac{1}{1+\frac{1-x_0}{x_0}e^{-\theta\sum\limits_{i=1}^{m}\alpha_{\lambda_i}(t_{i}-t_{i-1})-\alpha_{\lambda_j}t_{j}}}=0.
\end{aligned}
\end{equation}
Consequently, for  any $v\in\mathcal{V}$,
 $
\mathop{\lim}\limits_{\theta\rightarrow+\infty}y_{\theta}= \mathop{\lim}\limits_{\theta\rightarrow+\infty}x(\theta T+t_{v})=0.
$
One can see that the limit of the sequence  $\{y_{\theta}\}_{\theta\in \mathbb{N}}$ is 0. That is,  $\forall \,\, \epsilon>0$, ${\exists} \,\, \Theta>0$, s.t. $\theta>\Theta \Rightarrow \left|y_{\theta}\right|=\left|x(\theta T+t_{v})\right|<\epsilon$. Taking $\hat{t}=\Theta T +t_{v} \in\bar{\Omega}_\Theta$, then $\left|x(t)\right|<\epsilon$ for $t \ge 0$, which means $\mathop{\lim}\limits_{t\rightarrow+\infty}x(t)=0$. Therefore, we have proven that  the equilibrium point $x^*=0$ of \eqref{eq6}  is asymptotically stable.
\end{proof}

%\begin{theorem}\label{theorem3}
%If \eqref{eq11} holds (i.e., $\sum\limits_{i=1}^{m-1} (\beta_{i}-\beta_{i+1})t_{i}+\beta_{m}T<0$),  the following statements hold:
%\begin{enumerate}
% \item There always exist two equilibria of the periodically switched system \eqref{eq6}, namely $x^\ast=1$ and $x^\ast=0$;
% \item The equilibrium point $x^\ast=1$ is unstable and $x^\ast=0$ is stable;
 %\item  If $x_0 \neq 0$, then $x(t)$ converges 0 as $t$ goes to infinity, that is,  the equilibrium point $x^*=0$ of the periodically switched system \eqref{eq6}  is asymptotically stable.
  % \end{enumerate}
%\end{theorem}

%\begin{proof}
 %By summarizing the Theorem \ref{theorem1}, Lemma \ref{lemma4} and  Lemma \ref{lemma5}, we have that
% under \eqref{eq11}, the periodically switched system  \eqref{eq6} has two equilibrium points, $x^*=0$ and $x^*=1$. The equilibrium point
% $x^*=0$ is unstable, but  $x^*=1$ are asymptotically stable no matter whether the minimum point is on the boundary or inside each period $\bar{\Omega}_\theta$.  Hence, the above theorem is obviously valid.
%\end{proof}

 By summarizing Theorems \ref{theorem1}, \ref{theorem4} and \ref{theorem5}, one can conclude  that
under \eqref{eq11}, the periodically switched system  \eqref{eq6} has two equilibrium points, $x^*=0$ and $x^*=1$. The equilibrium point
 $x^*=0$ is unstable, but  $x^*=1$ are asymptotically stable no matter whether the supremum value of \eqref{eq6} is taken on the boundary or inside each period $\bar{\Omega}_\theta$. From Theorem~\ref{theorem5}, we know that under \eqref{eq11},  all periodically switched systems  in the set $\Delta$ can evolve into the full defection state when different strategy update rules are periodically changing.

%cases where the minimum point of system $x(t)$ \textcolor{red}{over the period  $\bar{\Omega}_\theta$} is at $\theta T$, $\theta T+t_{1}$, and $\theta T+t_{j}$, that is, $d_{\theta}=x(\theta T)$, $d_{\theta}=x(\theta T+t_{1})$, and $d_{\theta}=x(\theta T+t_{j})$ \textcolor{red}{for} $j\in\mathcal{W}$ and $\theta\in \mathbb{N}$.
%In what follows, we will  \textcolor{red}{provide three lemmas detailed stability analysis corresponding to these cases.}   For each lemma,  the  \textcolor{red}{ idea }  of the proof is that we begin by showing the stability of the periodically switched system \eqref{eq6}, and then prove the periodical stability  \textcolor{red}{ for all activation sequence in \eqref{eq8}}.\\

%%%%%%%%%%%%%%%%%%%Lemma1%%%%%%%%%%%%%%%

%%%%%%%%%%lemma3%%%%%%%%%%%

 %When the combined incentive is considered, the obtained optimal combined protocol is the same as the optimal incentive protocol of reward or punishment. As the case of the independent usage of reward or punishment, the relation of the optimal dynamical incentive protocols de- pends on the initial cooperation level x0 and the difference between the full cooperation state (x = 1) and the desired cooperation state (x = 1 ? ¦Ä), but it brings relatively complex results compared with simply using reward and punishment.

\begin{figure}[!t]
\centering
\includegraphics[width=3.5in]{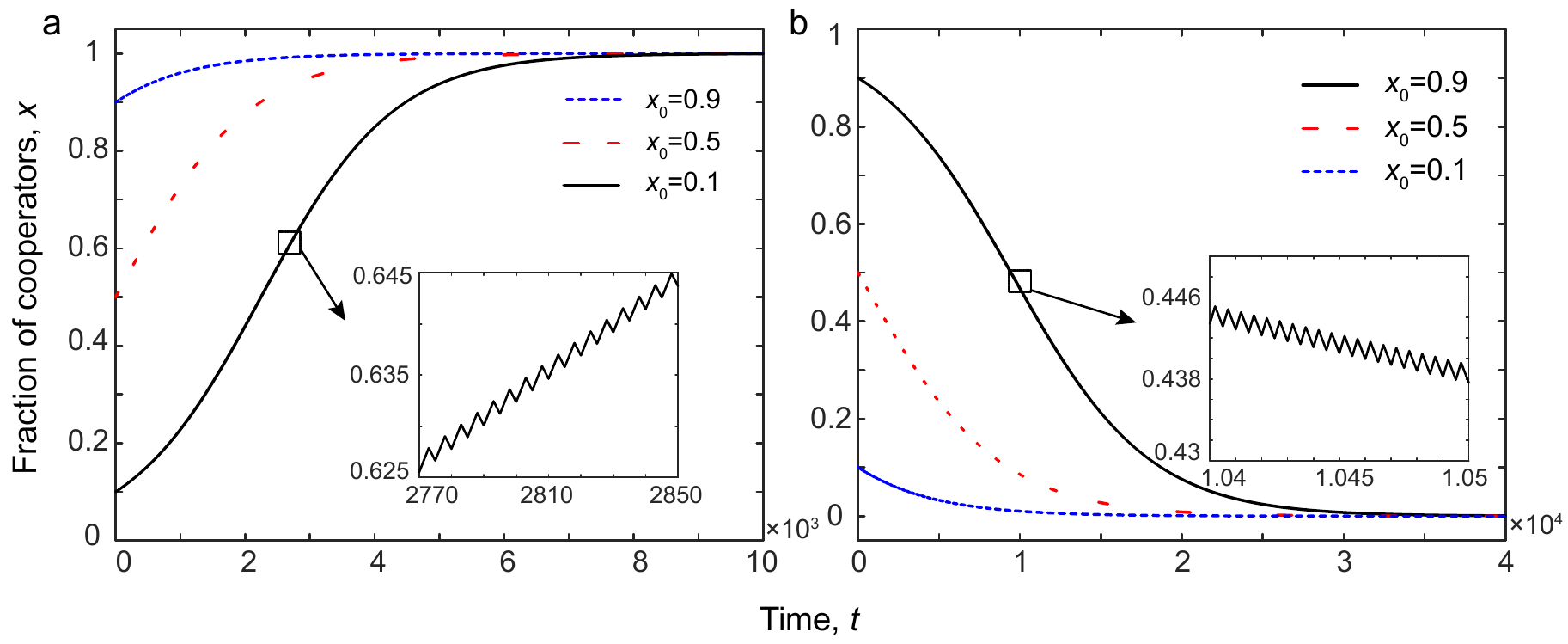}
\caption{Time evolution of the fraction of cooperators for  ``first IM updating, then PC updating'' at each period $\bar{\Omega}_\theta$ under different initial states. Three different initial states are $x_0=0.1$,  $x_0=0.5$, and $x_0=0.9$, respectively.
 The switching point is $t_{1}=3$ for panel $a$, and $t_{1}=2$ for panel $b$.
Other parameters are the same as those in Fig. \ref{fig2}.}
\label{fig3}
\end{figure}

\section{Numerical Results}
In this section, we provide a specific example to support the theoretical condition for the emergence of cooperative behavior we obtained. To this end, we study the simplest case, where individuals can periodically switch their strategy update rules between the PC and IM updating. Specifically, we first apply the pair approximation approach to describe how  the  fraction  of cooperators  on a regular network changes over time.  The resulting replicator equation for $\omega \ll 1$ limit under PC rule is given by
\begin{equation}\label{eq30}
\begin{aligned}
\frac{\textrm{d}x(t)}{\textrm{d}t}=-\frac{\omega k(k-2)c}{2(k-1)}x(t)(1-x(t)),
\end{aligned}
\end{equation}
while in the case of IM updating, it is
\begin{equation}\label{eq31}
\begin{aligned}
\frac{\textrm{d}x(t)}{\textrm{d}t}=\frac{\omega k^{2}(k-2)[b-(k+2)c]}{(k+1)^{2}(k-1)}x(t)(1-x(t)),
\end{aligned}
\end{equation}
where $b>(k+2)c$. Let $\alpha_1=-\frac{\omega k(k-2)c}{2(k-1)}$, and let $\alpha_2=\frac{\omega k^{2}(k-2)[b-(k+2)c]}{(k+1)^{2}(k-1)}$. The details of calculations are given in Appendix A.  For both replicator equations above, there exist two equilibria on the boundary of the parameter space, namely, $x^*=0$ and $x^*=1$. Under  the PC rule, the equilibrium  point $x^*=0$ is (asymptotically) stable and  $x^*=1$ is unstable. Under the IM rule and under  the condition that $b>(k+2)c$, the equilibrium  point  $x^*=0$  is unstable and  $x^*=1$  is (asymptotically) stable.

At each period $\bar{\Omega}_\theta$, the binary sequence set $\mathcal{A}$  is $
\mathcal{A}=\{(\alpha_{1},\alpha_{2}), (\alpha_{2}, \alpha_{1})\},
$
 and the periodically switched system set under the sequence set $\mathcal{B}$ is
$
\Delta=\{\Sigma|_{(\alpha_{\lambda_1}, \alpha_{\lambda_2})} |\,\,(\lambda_1, \lambda_2)\in\mathcal{B}\},
$
where $\mathcal{B}=\{(1,2),(2,1) \}$.

We consider ``first  PC updating, then IM updating'' over the period $\bar{\Omega}_\theta$, where the system equation \eqref{eq31}  with PC updating  represents subsystem 1, and \eqref{eq32}  with IM updating is subsystem 2.  Accordingly,  the periodically switched system in Eq.~\eqref{eq6}  can be rewritten as
\begin{equation}\label{eq32}
\frac{\textrm{d}x(t)}{\textrm{d}t}\bigg|_{(\alpha_{\lambda_1}, \alpha_{\lambda_2})}=\left\{
\begin{aligned}
\alpha_{\lambda_1}x(t)(1-x(t)),& &t\in\Omega_{1\theta},\\
\alpha_{\lambda_2}x(t)(1-x(t)),& &t\in\Omega_{2\theta},
\end{aligned}
\right.
\end{equation}
 where $(\alpha_{\lambda_1}, \alpha_{\lambda_2})=(\alpha_1, \alpha_2)$, $\Omega_{1\theta}=[ \theta T,  \theta T+t_{1})$, and $\Omega_{2\theta}=[ \theta T+t_{1},  (\theta+1) T)$ for all $\theta \in \mathbb{N}$. According to Theorem \ref{theorem1},  we yield that when $t_{1}\neq p_1$, there exist two equilibria, that is,   $x^*=1$ and $x^*=0$. From Theorems \ref{theorem3} and \ref{theorem5}, it follows that the equilibrium point $x^*=1$  is (asymptotically) stable if $t_{1}<p_1$, and  $x^*=0$  is (asymptotically) stable if $t_{1}>p_1$, where $p_1=\frac{\alpha_{\lambda_2}T}{\alpha_{\lambda_2}-\alpha_{\lambda_1}}=\frac{2k[b-(k+2)c]T}{2kb-(k^2+2k-1)c}$.
  Furthermore, we have also made numerical calculations to test our obtained  analytical results.  Figure \ref{fig2}  illustrates the fraction of cooperators $x(t)$ as a function of time $t$ for three initial states $x_0=0.1$, $x_0=0.5$, and $x_0=0.9$.  For each initial state, one observes that  when $t_{1}<p_1$ and   $x_0\in (0, 1)$, the system converges to the full cooperation state as shown in Fig.~\ref{fig2}$a$.  But when $t_{1}>p_1$ and  $x_0\in (0, 1)$, the system evolves to the full defection state as shown in Fig.~\ref{fig2}$b$.

Then, we consider an alternative case of ``first IM updating, then PC updating''  over the period $\bar{\Omega}_\theta$, where
the system equation \eqref{eq32}  with IM updating  is subsystem 1, and \eqref{eq31}  with PC updating represents subsystem 2. Accordingly,  the periodically switched system can be rewritten as
\begin{equation}\label{eq33}
\frac{\textrm{d}x(t)}{\textrm{d}t}\bigg|_{(\alpha_{\lambda_1}, \alpha_{\lambda_2})}=\left\{
\begin{aligned}
\alpha_{\lambda_1}x(t)(1-x(t)),& &t\in\Omega_{1\theta},\\
\alpha_{\lambda_2}x(t)(1-x(t)),& &t\in\Omega_{2\theta},
\end{aligned}
\right.
\end{equation}
 where $(\alpha_{\lambda_1}, \alpha_{\lambda_2})=(\alpha_2, \alpha_1)$,  $\Omega_{1\theta}=[ \theta T,  \theta T+t_{1})$, and $\Omega_{2\theta}=[ \theta T+t_{1},  (\theta+1) T)$ for all $\theta \in \mathbb{N}$. According to Theorem \ref{theorem1},  we yield that when $t_{1}\neq p_2$, there exist two equilibria, that is,   $x^*=1$ and $x^*=0$.
From Theorems \ref{theorem3} and \ref{theorem5}, it is checked that the equilibrium point  $x^*=1$  is (asymptotically) stable if $t_{1}>p_2$ and   $x_0\in (0, 1)$, and $x^*=0$ is (asymptotically) stable if $t_{1}<p_2$ and $x_0\in (0, 1)$,  where $p_2= \frac{\alpha_{\lambda_2}T}{\alpha_{\lambda_2}-\alpha_{\lambda_1}}=\frac{(k+1)^2cT}{2kb-(k^2+2k-1)c}$.
Accordingly, we validate our analytical results by means of numerical calculations as presented in Fig.~\ref{fig3}. As Fig.~\ref{fig3}$a$ illustrates, for $t_{1}>p_2$, the system converges to the full cooperation state. Otherwise, the system evolves to the full defection state as shown in Fig.~\ref{fig3}$b$.

\section{Discussion and Conclusion}

In this work, we systematically study the problem of cooperation on regular networks in the scenario of periodic switching of strategy update rules in a game-theoretical framework.  By using the approach of switched system theory, we derive the theoretical  condition for the promotion of cooperation. Under this condition, the periodically switched system with different strategy update rules can converge to the full cooperation state. In addition,
we  consider a concrete example of the PC and IM updating, and find that our numerical results verify our theoretical conclusions.  Our work provides an important insight into understanding the evolutionary dynamics of cooperation for theoretically modeling those individuals who use different update rules periodically.

We note that, however, our present work studies the evolutionary dynamics of cooperation under periodic switching of update rules on regular networks. Indeed, the regular network is a specific and simple form of networks. Thus, it is interesting to extend our work to general interaction networks. Moreover, our work focuses on the framework of the prisoner's dilemma game, which is a classic paradigm for studying the evolution of cooperation. There are other prototypical two-player dilemmas, such as snowdrift game~\cite{Doebeli05EL} and stag-hunt game~\cite{Skyrms04}, which are also typical paradigms for studying the problem of cooperation. Hence it is meaningful to study how the periodic switching of strategy update rules influences the evolutionary dynamics of cooperation in these games. Furthermore, apart from the simple pairwise-comparison and imitation rules, one can consider more sophisticated decision-making mechanisms, such as the multi-armed bandit problem~\cite{Huo17RSOS}. The periodically switching between exploration and exploitation in a decision-making process of the problem above can provide some insights into the emergence of cooperative behavior and its stability on complex networks, which needs further investigation.

In addition, the switching we considered is exogenous and independent of the system states. However, in some realistic scenarios, the switching rule may depend on the system states, and thus a promising extension of this work is to consider endogenous switching, where the switching rule to be used depends on the states, such as the frequency of one type. Furthermore, the periodic switching of update rules studied here can be regarded as a control scheme, which has some important effect on the behavior of complex systems; other forms of control schemes, such as under the influence of  technology, policy, market factors~\cite{Hu21Games}, and other small perturbations of system parameters, can be considered as a driven forces for evolutionary game dynamics, which are worthy of study in the future~\cite{Wang20PRSA}.

%Indeed previously related studies have made outstanding achievements, which have strong practical effects and
%can be applied to biomedicine [46¨C50], economic management [51,52], and artificial intelligence Huo17RSOS Hu21Games£¬Wang20PRSA

%I think if you are interested and time permits, you can consider to extend the current work in some aspects:
%1. Consider time-varying population structure;
%2. Consider a two-dimensional system or even higher-dimensional systems. It would be interesting to assume that the equilibrium x^*=1 of ALL subsystems are unstable, and then how to design the switching signal such that the x^*=1 of the switched system is stable.
%3. You can also consider introducing environment effect to the system. Namely, the payoff matrix is not fixed, but includes a time-varying parameter to represent the state of the environment. Refer to Lulu Gong's recent Automatica paper.

%In my opinion, it would be nice to extend the current results.

\appendices
 \section{Derivations of Equations (30) and (31)}
In this section,  we provide a detailed theoretical analysis to derive the governing dynamical equation for the PC and IM updating by using pair approximation approach, which are defined by Eqs.~(30)  and (31),  respectively.

%\leftline{\textbf{A.1 \,\, PC Updating}}

\subsection*{A.1 \,\, PC Updating}

Based on the pair approximation approach, let $x_{C}$ and $x_{D}$ denote the proportion of $C$-players (i.e., cooperators) and $D$-players (i.e., defectors) in the whole population. Furthermore, we denote the proportion of $CC$, $CD$, $DC$, and $DD$ pairs
by $x_{CC}$, $x_{CD}$, $x_{DC}$, and $x_{DD}$, respectively. Finally let $x_{i|j}$  denote the conditional probability of finding an $i$-player given that the neighboring node is a $j$-player, where $i, j \in \{C, D\}$. Using these notations, we have that
$
x_{C}+x_{D}=1,
$
$
x_{C|i}+x_{D|i}=1,
$
$
x_{ij}=x_{i|j}x_{j},
$
and
$
x_{CD}=x_{DC}.
$
In addition, it can be checked that $x_{C}$ and $x_{C|C}$ can be used to characterize the evolutionary dynamics of a system since
$
 x_{D}=1-x_{C},
$
$
x_{D|C}= 1-x_{C|C},
$
$
x_{DC}= x_{CD}= x_{C}x_{D|C}= x_{C}(1-x_{C|C}),
$$
x_{C|D}=\frac{x_{CD}}{x_{D}}=\frac{x_{C}(1-x_{C|C})}{1-x_{C}},$
$x_{D|D}=1-x_{C|D}=\frac{1-2x_{C}+x_{C}x_{C|C}}{1-x_{C}},$
and
$x_{DD}=x_{D}x_{D|D}=1-2x_{C}+x_{C}x_{C|C}.$

For PC updating~\cite{Ohtsuki2006JTB},  at each time step we randomly choose a focal player from the  population to revise its strategy who has $l$ cooperators and $k-l$ defectors among its $k$ neighbors.   If the focal player adopts strategy $D$, then the fitness of the focal player is
\begin{equation}
\begin{aligned}
g_{F}^D&=1-\omega+\omega \pi_{F}^D=1-\omega+\omega lb,
\end{aligned}
 \label{A1}
\tag{A1}
\end{equation}
and the fitness of a $C$-neighbor is
\begin{equation}
\begin{aligned}
g_{C}^D&=1-\omega+\omega \pi_{C}^D\\
&=1-\omega+\omega\Big\{(k-1)x_{C|C}(b-c)-\Big[(k-1)x_{D|C}+1\Big]c\Big\},
\end{aligned}
 \label{A2}
\tag{A2}
\end{equation}
where $\pi_{F}^D$ and $\pi_{C}^D$ denote the payoffs of the focal player and its $C$-neighbor, respectively.
%The probability that the focal player adopts the strategy of a $C$-neighbor for $\omega \rightarrow 0$ limit is

Since the focal player either keeps its current strategy or adopts the strategy of a neighbor with a probability that depends on the payoff difference, i.e., $\pi_{C}^D-\pi_{F}^D$, the probability  that the focal player adopts the strategy of a $C$-neighbor for $ \omega\rightarrow 0$ limit  is
\begin{equation}
\begin{aligned}
\Lambda=\frac{1}{1+e^{-\omega(\pi_{C}^D-\pi_{F}^D)}}=\frac{1}{2}+\omega\frac{\pi_{C}^D-\pi_{F}^D}{4}.
\end{aligned}
 \label{A3}
\tag{A3}
\end{equation}
Due to $g_{C}^D-g_{F}^D=\omega(\pi_{C}^D-\pi_{F}^D)$ for weak selection, one further yields $\Lambda=\frac{1}{2}+\frac{g_{C}^D-g_{F}^D}{4}$.

Therefore, $x_{C}$ increases by $\frac{1}{n}$ with probability
\begin{equation}
\begin{aligned}
 P\Big(\Delta x_{C}=\frac{1}{n}\Big)=x_{D}\sum_{l=0}^{k} \binom{k}{l} (x_{C|D})^{l}(x_{D|D})^{k-l} \frac{l}{k}\Lambda,
 \end{aligned}
  \label{A4}
\tag{A4}
\end{equation}
where $x_{D}\binom{k}{l} (x_{C|D})^{l}(x_{D|D})^{k-l}$ represents the probability that in the population, a defector having $l$ $C$-neighbors is randomly selected.

Consequently, the number of $CC$-pairs increases by $(k-1)x_{C|D}+1$ and $x_{CC}$ increases by $\frac{(k-1)x_{C|D}+1}{kn/2}$
with probability
\begin{equation}
\begin{aligned}
 P&\Big(\Delta x_{CC}=\frac{(k-1)x_{C|D}+1}{kn/2}\Big)=x_{D}\sum_{l=0}^{k} \binom{k}{l}\\
  &\quad (x_{C|D})^{l}(x_{D|D})^{k-l} \frac{l}{k}\Lambda.
\end{aligned}
  \label{A5}
\tag{A5}
\end{equation}

In addition, we consider another case where the randomly selected focal player adopts strategy $C$. The fitness of the focal player is
\begin{equation}
\begin{aligned}
g_{F}^C&=1-\omega+\omega \pi_{F}^C\\
&=1-\omega+\omega\Big[l(b-c) - (k-l) c\Big],
\end{aligned}
  \label{A6}
\tag{A6}
\end{equation}
and the fitness of a $D$-neighbor is
\begin{equation}
\begin{aligned}
g_{D}^C&=1-\omega+\omega \pi_{D}^C\\
&=1-\omega+\omega\Big[(k-1)x_{C|D}+1\Big] b,
\end{aligned}
  \label{A7}
\tag{A7}
\end{equation}
where $\pi_{F}^C$ and $\pi_{D}^C$ denote the payoffs of the focal player and its $D$-neighbor, respectively.
Then, the probability that the focal player adopts the strategy of a $D$-neighbor for $ \omega\rightarrow 0$ limit is defined by
\begin{equation}
\begin{aligned}
\Omega=\frac{1}{1+e^{-\omega(\pi_{D}^C-\pi_{F}^C)}}=\frac{1}{2}+\omega\frac{\pi_{D}^C-\pi_{F}^C}{4}=\frac{1}{2}+\frac{g_{D}^C-g_{F}^C}{4}.
\end{aligned}
  \label{A8}
\tag{A8}
\end{equation}
Therefore, $x_{C}$ decreases by $\frac{1}{n}$ with probability
\begin{equation}
\begin{aligned}
 P\Big(\Delta x_{C}=-\frac{1}{n}\Big)=x_{C}\sum_{l=0}^{k} \binom{k}{l}(x_{C|C})^{l}(x_{D|C})^{k-l} \frac{k-l}{k}\Omega,
 \end{aligned}
   \label{A9}
\tag{A9}
\end{equation}
where $x_{C} \binom{k}{l}(x_{C|C})^{l}(x_{D|C})^{k-l}$ represents the probability that in the population, a cooperator having $l$ $C$-neighbors  is randomly selected.

Furthermore, the number of $CC$-pairs decreases by $(k-1)x_{C|C}$, and hence $x_{CC}$ decreases by $\frac{2(k-1)x_{C|C}}{kn}$ with probability
\begin{equation}
\begin{aligned}
 P&\Big(\Delta x_{CC}=-\frac{2(k-1)x_{C|C}}{kn}\Big)&=x_{C}\sum_{l=0}^{k} \binom{k}{l}
 \\
  &\quad(x_{C|C})^{l}(x_{D|C})^{k-l} \frac{k-l}{k}\Omega.
\end{aligned}
   \label{A10}
\tag{A10}
\end{equation}
Suppose that one replacement event occurs in one unit of time $t$, and one can obtain the derivative of $x_{C}$ with respect to $t$ as
\begin{equation}
\begin{aligned}
& \frac{\textrm{d}x_{C}(t)}{\textrm{d}t}=\frac{E(\Delta x_{C})}{\Delta t}\\
&=\frac{\frac{1}{n}P(\Delta x_{C}=\frac{1}{n})-\frac{1}{n}P(\Delta x_{C}=-\frac{1}{n})}{\frac{1}{n}} \\
 &=\omega \Psi(x_{C}, x_{C\mid C}, \omega), \\
  &=\omega \bigg\{\frac{1}{2}x_{C}(1-x_{C|C})\Big[\frac{(k-1)b(x_{C\mid C}-x_{C})}{1-x_{C}}\\
   &-kc-b\Big]+o(\omega) \bigg\}.
 \end{aligned}
 \label{A11}
\tag{A11}
\end{equation}

 The derivative of $x_{CC}$ with respect to $t$ is
\begin{equation}
\begin{aligned}
 &\frac{\textrm{d}x_{CC}(t)}{\textrm{d}t}
=\frac{1}{\Delta t}E(\Delta x_{CC})\\
&=\frac{1}{\frac{1}{n}}\bigg\{\frac{(k-1)x_{C|D}+1}{kn/2} P\Big(\Delta x_{CC}=\frac{(k-1)x_{C|D}+1}{kn/2}\Big) \\
&-\frac{2(k-1)x_{C|C}}{kn} P\Big(\Delta x_{CC}=-\frac{2(k-1)x_{C|C}}{kn}\Big) \bigg\}\\
&=\frac{x_{C}(1-x_{C|C})}{k}\Big[1-(k-1)\frac{x_{C\mid C}-x_{C}}{1-x_{C}}\Big]+o(\omega).
\end{aligned}
 \label{A12}
\tag{A12}
\end{equation}

Due to $x_{C\mid C}=\frac{x_{CC}}{x_{C}}$, one can derive the time derivative of $x_{C\mid C}$ as
\begin{equation}
\begin{aligned}
\frac{\textrm{d}x_{C\mid C}(t)}{\textrm{d}t}&=\Phi(x_{C}, x_{C\mid C}, \omega)\\
&=\frac{1-x_{C|C}}{k}\Big[1-(k-1)\frac{x_{C\mid C}-x_{C}}{1-x_{C}}\Big]+o(\omega).
\end{aligned}
 \label{A13}
\tag{A13}
\end{equation}

Combining \eqref{A11}  and \eqref{A13}, we  have the following dynamical system
\begin{equation} \label{A14}
\tag{A14}
\left\{\begin{array}{lc}
  \frac{\textrm{d}x_{C}(t)}{\textrm{d}t}=\omega \Psi(x_{C}, x_{C\mid C}, \omega),\\
 \frac{\textrm{d}x_{C\mid C}(t)}{\textrm{d}t}= \Phi(x_{C}, x_{C\mid C}, \omega).
\end{array}\right.
\end{equation}

System \eqref{A14}  can be rewritten with a change in time scale as
\begin{equation}
\label{A15}
\tag{A15}
\left\{\begin{array}{lc}
  \frac{\textrm{d}x_{C}(\tau)}{\textrm{d}\tau}=\Psi(x_{C}, x_{C\mid C}, \omega),\\
 \omega\frac{\textrm{d}x_{C\mid C}(\tau)}{\textrm{d}\tau}=\Phi(x_{C}, x_{C\mid C}, \omega),
\end{array}\right.
\end{equation}
where $ \tau=\omega t$. We refer to the time scale given by $\tau$ as slow, whereas the
time scale for $t$ is fast.
Further, as long as $\omega\neq0$, the two systems are equivalent
and are referred to as singular perturbation when $0<\omega\ll1$. Letting $\omega\rightarrow0$ in \eqref{A15}, we obtain the
system
\begin{equation}\label{A16}
\tag{A16}
\left\{\begin{array}{lc}
  \frac{\textrm{d}x_{C}(\tau)}{\textrm{d}\tau}=\Psi(x_{C}, x_{C\mid C}, 0),\\
 0=\Phi(x_{C}, x_{C\mid C}, 0),
\end{array}\right.
\end{equation}
which is called the reduced model.
One thinks of the condition $\Phi(x_{C}, x_{C\mid C}, 0)=0$ as determining a set on which the flow
is given by $\frac{\textrm{d}x_{C}(\tau)}{\textrm{d}\tau}=\Psi(x_{C}, x_{C\mid C}, 0)$.
For $\omega=0$, the set $\mathcal{V}=\{(x_{C}, x_{C\mid C})\mid\Phi(x_{C}, x_{C\mid C}, 0)=0\}$ consists of two  subsets
\begin{equation}
\label{A17}
\tag{A17}
\begin{aligned}
\mathcal{M}_{0}^{0}=\Big\{(x_{C}, x_{C\mid C})\big| x_{C\mid C}=1\Big\}=\Big\{(1, 1)\Big\},
\end{aligned}
\end{equation}
and
\begin{equation}
\label{A18}
\tag{A18}
\begin{aligned}
\mathcal{M}_{0}^{1}=\Big\{(x_{C}, x_{C\mid C})\big| x_{C|C}=\frac{1}{k-1}+\frac{k-2}{k-1}x_{C}\Big\}.
\end{aligned}
\end{equation}
It is worth noting that $\mathcal{M}_{0}^{0}$ is $\{(1, 1)\}$ since $x_{CC}=x_{C}x_{C|C}$, that is, if $x_{C|C}=1$, then $x_{C}=1$.
We prove this by contradiction. Suppose $x_{C}\neq1$, then there exists at least a $D$-player in a population. Under Assumption \ref{assumption1}, the network is connected, and thus at least a $C$-player is linked to
a $D$-player, which leads to $x_{D|C}>0$.  In addition, due to $x_{D|C}+ x_{C|C}=1$,  this implies that $x_{C|C}<1$,
contradicting  $x_{C|C}=1$. Therefore, we verify the above conclusion.

From Fenichel's Second Theorem~\cite{Jones1995DS}, only $\mathcal{M}_{0}^{1}$ is a normally hyperbolic compact manifold with a boundary. Then, for  $\varepsilon>0$ sufficiently small, there exists a local stable manifold $\mathcal{M}_{\varepsilon}^{1}$. This manifold lies within  $\mathcal{O}(\varepsilon)$ of $\mathcal{M}_{0}^{1}$ and is diffeomorphic to a stable manifold $\mathcal{M}_{0}^{1}$. Moreover, it is smooth and locally invariant under the flow of the system \eqref{A15}.  Therefore, by substituting \eqref{A18} into \eqref{A15},  one yields the reduced model
that can characterize the dynamical equation of \eqref{A15} for $\omega\rightarrow0$ limit, given by
\begin{equation}
\label{A19}
\tag{A19}
\begin{aligned}
\frac{\textrm{d}x_C(\tau)}{\textrm{d}\tau}=-\frac{k(k-2)c}{2(k-1)}x_{C}(1-x_{C}).
 \end{aligned}
\end{equation}
Let $t=\frac{\tau}{\omega}$ and $x_{C}=x$, and Eq.~\eqref{A19} can be rewritten as
\begin{equation}
\label{A20}
\tag{A20}
\begin{aligned}
\frac{\textrm{d}x(t)}{\textrm{d}t}=-\frac{\omega k(k-2)c}{2(k-1)}x(1-x),
 \end{aligned}
\end{equation}
 which has two equilibria $x^*=0$ and $x^*=1$.
Define the function $F(x)$ as
\begin{equation}
\label{A21}
\tag{A21}
\begin{aligned}
F(x)=-\frac{\omega k(k-2)c}{2(k-1)}x(1-x).
\end{aligned}
\end{equation}
The derivative of $F(x)$ with respect to $x$ is
\begin{equation}
\label{A22}
\tag{A22}
\begin{aligned}
\frac{\textrm{d}F(x)}{\textrm{d}x}=-\frac{\omega k(k-2)c}{2(k-1)}(1-2x),
\end{aligned}
\end{equation}
and  then at these two equilibria one gets that $\frac{\textrm{d}F(x)}{\textrm{d}x}\big|_{x^*=1}=\frac{\omega k(k-2)c}{2(k-1)}$ and $\frac{\textrm{d}F(x)}{\textrm{d}x}\big|_{x^*=0}=-\frac{\omega k(k-2)c}{2(k-1)}$. Under Assumption 2,  one yields that $\frac{\omega k(k-2)c}{2(k-1)}$ is positive. Therefore, when the initial state $x_0\in (0, 1)$, then   $\frac{\textrm{d}F(x)}{\textrm{d}x}\big|_{x^*=1}>0$  and $\frac{\textrm{d}F(x)}{\textrm{d}x}\big|_{x^*=0}<0$ for all $t\geq0$. This implies that when the initial state  $x_0\in (0, 1)$, the equilibrium point $x^*=1$ is unstable and $x^*=0$ is (asymptotically) stable, that is, cooperation can never emerge as observed in previous work \cite{Ohtsuki2006JTB}.

\subsection*{A.2 \,\, IM Updating}
For IM updating~\cite{Ohtsuki06Nature, Ohtsuki2006JTB}, at each time step a player of the population is randomly selected as the focal player and update its strategy who always has $l$ cooperators and $k-l$ defectors among its $k$ neighbors. If the focal player adopts strategy $D$, then the fitness of the focal player is
\begin{equation}
\label{A23}
\tag{A23}
\begin{aligned}
\bar{g}_{F}^D=g_{F}^D,
\end{aligned}
\end{equation}
the fitness of a $C$-neighbor is
\begin{equation}
\begin{aligned}
\bar{g}_{C}^D=g_{C}^D,
\end{aligned}
\label{A24}
\tag{A24}
\end{equation}
and the fitness of a $D$-neighbor is
\begin{equation}
\label{A25}
\tag{A25}
\begin{aligned}
\bar{g}_{D}^D=1-\omega+\omega\big[(k-1)x_{C\mid D}b\big].
\end{aligned}
\end{equation}

Since the focal player can either stay with its own strategy or imitate one of its neighbors' strategies with probability proportional to their fitness, the probability that the focal player adopts the strategy $C$ is given by
\begin{equation}
\label{A26}
\tag{A26}
\begin{aligned}
\Theta=\frac{l\bar{g}_{C}^D}{l\bar{g}_{C}^D+(k-l)\bar{g}_{D}^D+\bar{g}_{F}^D}.
\end{aligned}
\end{equation}
Therefore, $x_{C}$ increases by $\frac{1}{n}$ with probability
\begin{equation}
\label{A27}
\tag{A27}
\begin{aligned}
 P\Big(\Delta x_{C}=\frac{1}{n}\Big)=x_{D}\sum_{l=0}^{k}\binom{k}{l} (x_{C|D})^{l}(x_{D|D})^{k-l}\Theta.
 \end{aligned}
\end{equation}
Consequently, the number of $CC$-pairs increases by $l$ and hence $x_{CC}$ increases by
$\frac{2l}{kn}$ with probability
\begin{equation}
\label{A28}
\tag{A28}
\begin{aligned}
 P\Big(\Delta x_{CC}=\frac{2l}{kn}\Big)=x_{D}\binom{k}{l}(x_{C|D})^{l}(x_{D|D})^{k-l}\Theta.
 \end{aligned}
\end{equation}

In the alternative case, the randomly selected focal player adopts strategy $C$.  The fitness of the focal player is
\begin{equation}
\label{A29}
\tag{A29}
\begin{aligned}
\bar{g}_{F}^C=g_{F}^C,
\end{aligned}
\end{equation}
the fitness of a $D$-neighbor is
\begin{equation}
\label{A30}
\tag{A30}
\begin{aligned}
\bar{g}_{D}^C=g_{D}^C,
\end{aligned}
\end{equation}
and the fitness of a $C$-neighbor is
\begin{equation}
\label{A31}
\tag{A31}
\begin{aligned}
\bar{g}_{C}^C=1-\omega+ \omega\Big\{\big[(k-1)x_{C|C}+1\big](b-c)-(k-1)x_{D|C}c\Big\}.
\end{aligned}
\end{equation}
The probability that the focal player adopts the strategy $D$ is
\begin{equation}
\label{A32}
\tag{A32}
\begin{aligned}
\Upsilon=\frac{(k-l) \bar{g}_{D}^C}{l \bar{g}_{C}^C+(k-l) \bar{g}_{D}^C+\bar{g}_{F}^C}.
\end{aligned}
\end{equation}
Thus, $x_{C}$ decreases by $\frac{1}{n}$ with probability
\begin{equation}
\label{A33}
\tag{A33}
\begin{aligned}
 P\Big(\Delta x_{C}=-\frac{1}{n}\Big)= x_{C}\sum_{l=0}^{k} \binom{k}{l} (x_{C|C})^{l}(x_{D|C})^{k-l}\Upsilon.
 \end{aligned}
\end{equation}
Therefore, the number of $CC$-pairs decreases by $l$ and hence $x_{CC}$ decreases by $\frac{2 l}{kn}$   with probability
\begin{equation}
\label{A34}
\tag{A34}
\begin{aligned}
  P\Big(\Delta x_{CC}=-\frac{2 l}{kn}\Big)=x_{C}\binom{k}{l} (x_{C|C})^{l}(x_{D|C})^{k-l }\Upsilon.
  \end{aligned}
\end{equation}
From  these calculations, the time derivative of
$x_{C}$ is given by
\begin{equation}
\label{A35}
\tag{A35}
\begin{aligned}
 &\frac{\textrm{d}x_{C}(t)}{\textrm{d}t}=\frac{E(\Delta x_{C})}{\Delta t}\\
 &=\frac{\frac{1}{n}P(\Delta x_{C}=\frac{1}{n})-\frac{1}{n}P(\Delta x_{C}=-\frac{1}{n})}{\frac{1}{n}} \\
 &=\omega\bar{\Psi}(x_{C}, x_{C\mid C}, \omega)
 \\
 &=\omega\bigg\{\frac{k x_{C}(1-x_{C|C})}{(k+1)^{2}}\Big\{-2(kc+b)\\
 &+\frac{(k-1)b(x_{C\mid C}-x_{C})}{1-x_{C}}\Big[2+\frac{(k-1)(1-2x_{C}+x_{C\mid C})}{1-x_{C}}\\
 &-\frac{k(k-1)c(1-2x_{C}+x_{C\mid C})}{1-x_{C}}\Big]\Big\}+o(\omega) \bigg\}.
 \end{aligned}
\end{equation}

Accordingly, the time derivative of $x_{CC}$ is given by
 \begin{equation}
 \label{A36}
\tag{A36}
\begin{aligned}
&\frac{\textrm{d}x_{CC}(t)}{\textrm{d}t}=\frac{E(\Delta x_{CC})}{\Delta t}\\
 &=\frac{\sum_{l=0}^k \frac{2l}{kn}\big[P(\Delta x_{CC}=\frac{2l}{kn})- P(\Delta x_{CC}=-\frac{2l}{kn})\big]}{\frac{1}{n}}\\
&=\frac{2x_{C}(1-x_{C|C})}{k+1}\Big[1-(k-1)\frac{x_{C\mid C}-x_{C}}{1-x_{C}}\Big]+o(\omega).
\end{aligned}
\end{equation}
Furthermore, we have
\begin{equation}
 \label{A37}
\tag{A37}
\begin{aligned}
\frac{\textrm{d}x_{C\mid C}(t)}{\textrm{d}t}
&=\bar{\Phi}(x_{C}, x_{C\mid C}, \omega)\\
&=\frac{2(1-x_{C|C})}{k+1}\Big[1-(k-1)\frac{x_{C\mid C}-x_{C}}{1-x_{C}}\Big]+o(\omega).
\end{aligned}
\end{equation}

Combining \eqref{A35} and \eqref{A37}, we have the following dynamical system
\begin{equation}
 \label{A38}
\tag{A38}
 \left\{\begin{array}{lc}
 \frac{\textrm{d}x_{C}}{\textrm{d}t}= \omega\bar{\Psi}(x_{C}, x_{C\mid C}, \omega),\\
 \frac{\textrm{d}x_{C|C}}{\textrm{d}t}=\bar{\Phi}(x_{C}, x_{C\mid C}, \omega).
\end{array}\right.
\end{equation}
%\begin{equation}
%\begin{aligned}
%&\Psi_{\textrm{IM}}^1=\frac{\omega kp_{CD}}{(k+1)^{2}}[-2(kc+b)-k(k-1)(q_{C\mid C}+q_{D\mid D})c\\
% &+(k-1)(q_{C\mid C}-q_{C\mid D})[(k-1)(q_{C\mid C}+q_{D\mid D})+2]b+o(\omega)\\
%&=\frac{\omega kp_{C}(1-q_{C\mid C})}{(k+1)^{2}}[-2(kc+b)-\frac{k(k-1)(q_{C\mid C}+1-2p_{C})}{1-p_{C}}c\\
% &+(k-1)(q_{C\mid C}-p_{C})\frac{(k-1)(q_{C\mid C}+1-2p_{C})+2(1-p_{C})}{(1-p_{C})^{2}}b]+o(\omega),
% \end{aligned}
%\end{equation}

Similar to the PC rule, system \eqref{A38}  can be rewritten with a change in time scale as
\begin{equation}
 \label{A39}
\tag{A39}
\left\{\begin{array}{lc}
  \frac{\textrm{d}x_{C}(\tau)}{\textrm{d}\tau}=\bar{\Psi}(x_{C}, x_{C\mid C}, \omega),\\
 \omega\frac{\textrm{d}x_{C\mid C}(\tau)}{\textrm{d}\tau}=\bar{\Phi}(x_{C}, x_{C\mid C}, \omega),
\end{array}\right.
\end{equation}
where $\tau=\omega t$.  Letting $\omega\rightarrow0$ in \eqref{A39},  we obtain the reduced model
\begin{equation}
 \label{A40}
\tag{A40}
\left\{\begin{array}{lc}
  \frac{\textrm{d}x_{C}(\tau)}{\textrm{d}\tau}=\bar{\Psi}(x_{C}, x_{C\mid C}, 0),\\
 0=\bar{\Phi}(x_{C}, x_{C\mid C}, 0).
\end{array}\right.
\end{equation}
One thinks of the condition $\bar{\Phi}(x_{C}, x_{C\mid C}, 0)=0$ as determining a set on which the flow
is given by $\frac{\textrm{d}x_{C}(\tau)}{\textrm{d}\tau}=\bar{\Psi}(x_{C}, x_{C\mid C}, 0)$.
For $\omega=0$, the set $\mathcal{V}=\big\{(x_{C}, x_{C\mid C})\big|\bar{\Phi}(x_{C}, x_{C\mid C}, 0)=0\big\}$ consists of two  subsets
\begin{equation}
 \label{A41}
\tag{A41}
\begin{aligned}
\mathcal{M}_{0}^{0}=\Big\{(x_{C}, x_{C\mid C})\mid x_{C\mid C}=1\Big\}=\Big\{(1, 1)\Big\},
\end{aligned}
\end{equation}
and
\begin{equation}
 \label{A42}
\tag{A42}
\begin{aligned}
\mathcal{M}_{0}^{1}=\Big\{(x_{C}, x_{C\mid C})\big| x_{C|C}=\frac{1}{k-1}+\frac{k-2}{k-1}x_{C}\Big\}.
\end{aligned}
\end{equation}

From Fenichel's Second Theorem~\cite{Jones1995DS}, only $\mathcal{M}_{0}^{1}$ is a normally hyperbolic compact manifold with a boundary. Then, for  $\varepsilon>0$ sufficiently small, there exists a local stable manifold $\mathcal{M}_{\varepsilon}^{1}$. This manifold lies within  $\mathcal{O}(\varepsilon)$ of $\mathcal{M}_{0}^{1}$ and is diffeomorphic to a stable manifold $\mathcal{M}_{0}^{1}$. Moreover, it is smooth and locally invariant under the flow of the system \eqref{A39}.  Therefore, by substituting \eqref{A42} into the system \eqref{A39},  one yields the reduced model
that can characterize the dynamical equation of \eqref{A39}  for $\omega\rightarrow0$ limit, given by
\begin{equation}
 \label{A43}
\tag{A43}
\begin{aligned}
\frac{\textrm{d}x_{C}(\tau)}{\textrm{d}\tau} =\frac{k^{2}(k-2)[b-(k+2)c]}{(k+1)^{2}(k-1)}x_{C}(1-x_{C}).
 \end{aligned}
\end{equation}
Let $t=\frac{\tau}{\omega}$ and $x_{C}=x$, and Eq.~\eqref{A43} becomes
\begin{equation}
 \label{A44}
\tag{A44}
\begin{aligned}
\frac{\textrm{d}x(t)}{\textrm{d}t} =\frac{\omega k^{2}(k-2)[b-(k+2)c]}{(k+1)^{2}(k-1)}x(1-x),
 \end{aligned}
\end{equation}
 which has two fixed points $x^*=0$ and $x^*=1$. Define the function $\bar{F}(x)$ as
\begin{equation}
 \label{A45}
\tag{A45}
\begin{aligned}
\bar{F}(x)=\frac{\omega k^{2}(k-2)[b-(k+2)c]}{(k+1)^{2}(k-1)}x(1-x),
\end{aligned}
\end{equation}
and its derivative with respect to $x$ is
\begin{equation}
 \label{A46}
\tag{A46}
\begin{aligned}
\frac{\textrm{d}\bar{F}(x)}{\textrm{d}x}=\frac{\omega k^{2}(k-2)[b-(k+2)c]}{(k+1)^{2}(k-1)}(1-2x),
\end{aligned}
\end{equation}
and one gets that $\frac{\textrm{d}\bar{F}(x)}{\textrm{d}x}\big|_{x^*=1}=-\frac{\omega k^{2}(k-2)[b-(k+2)c]}{(k+1)^{2}(k-1)}$ and $\frac{\textrm{d}\bar{F}(x)}{\textrm{d}x}\big|_{x^*=0}=\frac{\omega k^{2}(k-2)[b-(k+2)c]}{(k+1)^{2}(k-1)}$. Hence,  if $b/c>k+2$,  one yields that $\frac{\omega k^{2}(k-2)[b-(k+2)c]}{(k+1)^{2}(k-1)}$ is positive. Therefore, under the condition that $b/c>k+2$, if the initial state $x_0\in (0, 1)$, then $\frac{\textrm{d}\bar{F}(x)}{\textrm{d}x}\big|_{x^*=1}<0$ and $\frac{\textrm{d}\bar{F}(x)}{\textrm{d}x}\big|_{x^*=0}>0$ for all $t\geq0$. This means that  the equilibrium point $x^*=1$ is (asymptotically)  stable and $x^*=0$ is unstable. Thus, we obtain the condition $b/c>k+2$ for the evolution of cooperation under the IM update rule as previously obtained in~\cite{Ohtsuki06Nature, Ohtsuki2006JTB}.

\ifCLASSOPTIONcaptionsoff
  \newpage
\fi


\begin{thebibliography}{1}

\bibitem{Hauert2010MIT}
S.~Hauert, S.~Mitri, L.~Keller, and D. ~Floreano,  \emph{Evolving Cooperation: From Biology to Engineering. The Horizons of Evolutionary Robotics}, MIT Press, 2010.

\bibitem{Perc2017PR}
M.~Perc, J.~J.~Jordan, D.~G.~Rand, Z.~Wang, S.~Boccaletti, and A.~Szolnoki,  \emph{Statistical physics of human cooperation}, Phys. Rep., vol. 687, pp. 1-51, 2017.


\bibitem{Vasconcelos2013NCC}
V.~V.~Vasconcelos, F.~C.~Santos, and J.~M.~Pacheco,  \emph{A bottom-up institutional approach to cooperative governance of risky commons},  Nat. Clim. Change, vol.  3, pp. 797-801, 2013.



\bibitem{Vincent 2005CUP}
T.~L.~Vincent, and J.~S.~Brown,  \emph{Evolutionary game theory, natural selection, and Darwinian dynamics}, Cambridge University Press, 2005.

\bibitem{Riehl22018ARC}
J.~Riehl, P.~Ramazi, and  M.~Cao,  \emph{A survey on the analysis and control of evolutionary matrix games},  Annual Reviews in Control, vol.  45, pp. 87--106, 2018.


\bibitem{Su2022SA}
Q.~Su,  A.~McAvoy, and J.~B.~Plotkin,  \emph{Evolution of cooperation with contextualized behavior},  Sci. Adv., vol.  8, pp. eabm6066, 2022.




\bibitem{Hofbauer1998CUP}
J.~Hofbauer, and K.~Sigmund,  \emph{Evolutionary games and population dynamics}, Cambridge University Press, 1998.



%%1
\bibitem{Vasconcelos2015M3AS}
V.~V.~Vasconcelos, F.~C.~Santos, and J.~M.~Pacheco,  \emph{Cooperation dynamics of polycentric climate governance}, Math. Models Methods Appl. Sci., vol. 25, pp. 2503-2517, 2015.




\bibitem{Su2022NHB}
Q.~Su,  A.~McAvoy, Y.~Mori, and J.~B.~Plotkin,  \emph{Evolution of prosocial behaviours in multilayer populations},  Nat. Hum. Behav., vol.  6, pp. 338-348, 2022.


  \bibitem{Nowak1992nature}
M.~ A.~Nowak, and R.~M.~May,  \emph{Evolutionary games and spatial chaos}, Nature, vol. 359, pp. 826-829, 1992.

  \bibitem{Perc2008PRE}
M.~Perc, and  A.~Szolnoki,  \emph{Social diversity and promotion of cooperation in the spatial prisoner's dilemma game}, Phys. Rev. E, vol. 77, pp. 011904, 2008.

  \bibitem{Perc2006NJP}
M.~Perc, and  A.~Szolnoki,  \emph{Coherence resonance in a spatial prisoner's dilemma game}, New J. of Phys., vol. 8, pp. 22, 2006.

%%%Ä£·Â´´ÐÂ5
\bibitem{Amaral2018PRE}
M. A. Amaral, and M. A. Javarone,  \emph{Heterogeneous update mechanisms in evolutionary games: mixing innovative and imitative dynamics}, Phys. Rev. E, vol. 97, pp. 042305, 2018.


%%%´´ÐÂ6
 \bibitem{Blume1995BEB}
L. E. Blume,  \emph{The statistical mechanics of best-response strategy revision}, Games Econ. Behav., vol. 11, pp. 111--145, 1995.

%%%¸üÐÂ¹æÔòÓÐ¹Ø10
\bibitem{Govaert2021TCNS}
A. Govaert, P. Ramazi, and M. Cao,  \emph{Rationality, Imitation, and Rational Imitation in Spatial Public Goods Games}, IEEE Trans. Control. Netw. Syst., vol. 8, pp. 1324-1335, 2021.



\bibitem{Como2020TCNS}
G.~Como, F.~Fagnani, and L.~Zino,  \emph{ Imitation dynamics in population games on community networks}, IEEE Trans. Control. Netw. Syst., vol. 8, pp. 65-76, 2020.


\bibitem{Allen2017nature}
B.~Allen, G.~Lippner, Y.~T.~Chen, B.~Fotouhi, N.~Momeni,  S.~T.~Yau, and  M.~A.~Nowak,  \emph{Evolutionary dynamics on any population structure}, Nature, vol. 544, pp.  227-230, 2017.

\bibitem{Banez2021WN}
R.~A.~Banez, L.~Li, C.~Yang, and Z.~Han,  \emph{Multiple-Population Mean Field Game for Social Networks}, Mean Field Game and its Applications in Wireless Networks. Wireless Networks. Springer, 2021: 113-145.


\bibitem{Barreiro16TSMC}
J.~Barreiro-Gomez, G.~Obando, and N.~Quijano,  \emph{Distributed population dynamics: Optimization and control applications}, IEEE Trans. Syst. Man Cybern., vol. 47(2), pp. 304-314, 2016.


\bibitem{Govaert22CSL}
A.~Govaert, L.~Zino, and E.~Tegling,  \emph{Population games on dynamic community networks}, IEEE Contr. Syst. Lett., vol. 6, pp. 2695-2700, 2022.

%%3
\bibitem{Ohtsuki06Nature}
H. Ohtsuki, C. Hauert,  E. Lieberman,  and M.~A. Nowak,  \emph{A simple rule for the evolution of cooperation on graphs and social networks}, Nature, vol. 441, pp. 502-505, 2006.

\bibitem{Szab07PR}
G. Szab\'{o},  and G. F\'{a}th,  \emph{Evolutionary games on graphs}, Phys. Rep., vol. 446, pp. 97-216, 2007.


%%8
 \bibitem{SzaboPRE98}
G. Szab\'{o}, and C. T\H{o}ke,  \emph{Evolutionary prisoner's dilemma game on a square lattice}, Phys. Rev. E, vol. 58, pp. 69, 1998.


 %%%Ä£·Â4
  \bibitem{Ramazi2022Auto}
P. Ramazi, J. Riehl, and M. Cao,  \emph{The lower convergence tendency of imitators compared to best responders}, Automatica, vol. 139, pp. 110185, 2022.

%%2
\bibitem{Fu2011PRSB}
F.~Fu,  D.~I.~Rosenbloom, L.~Wang, and M.~A.~Nowak,  \emph{Imitation dynamics of vaccination behaviour on social networks}, Proc. Royal Soc. B, vol. 278, pp. 42-49, 2011.


%%7
 \bibitem{chen2008PRE}
X. Chen, and L. Wang,  \emph{Promotion of cooperation induced by appropriate payoff aspirations in a small-world networked game}, Phys. Rev. E, vol. 77, pp. 017103, 2008.




%%2
\bibitem{Ohtsuki2006JTB}
H. Ohtsuki, and M.~A. Nowak,  \emph{The replicator equation on graphs}, J. Theor. Biol., vol. 243, pp. 86-97, 2006.



%%21
 \bibitem{Shi2020TNSE}
L. Shi, C.  Shen, Q. Shi, Z. Wang, J. Zhao, X.  Li, and S. Boccaletti,  \emph{Recovering network structures based on evolutionary game dynamics via secure dimensional reduction}, IEEE Trans. Netw. Sci. Eng., vol. 7, pp. 2027-2036, 2020.


\bibitem{Zhang2019IEEETCS}
J.~Zhang, and M.~Cao,  \emph{Strategy competition dynamics of multi-agent systems in the framework of evolutionary game theory},  IEEE Trans. Circuits Syst. II: Express Briefs, vol.  67, pp. 152--156, 2019.

\bibitem{Hu2021TNSE}
Z. Hu, X. Li, J. Wang, C. Xia, Z. Wang, and M. Perc,  \emph{Adaptive reputation promotes trust in social networks},  IEEE Trans. Netw. Sci. Eng., vol. 8, pp. 3087¨C3098, 2021.



%%14
\bibitem{Fleming1986JM}
T. H. Fleming, and E. R. Heithaus,  \emph{Seasonal foraging behavior of the frugivorous bat Carollia perspicillata}, J. Mammal., vol. 67, pp. 660-671, 1986.

%%15
\bibitem{Hutto1981Auk}
R. L. Hutto,  \emph{Seasonal variation in the foraging behavior of some migratory western wood warblers}, Auk, vol. 98, pp. 765-777, 1981.

%%16
\bibitem{Conner11981Auk}
R. N.  Conner,  \emph{Seasonal changes in woodpecker foraging patterns}, Auk, vol. 98, pp. 562-570, 1981.

%%9
% \bibitem{Hauser2019nature}
%O. P. Hauser, C. Hilbe,  K. Chatterjee, and M. A. Nowak,  \emph{Social dilemmas among unequals}, Nature, vol. 572, pp. 524-527, 2019.

%%10
% \bibitem{Govaert2021TCNS}
%A.~Govaert, P.~Ramazi, and M.~Cao,  \emph{Rationality, imitation, and rational imitation in spatial public goods games}, IEEE Transactions on Control of Network Systems, vol. 8(3), pp. 1324--1335, 2021.



%%%½ûÓæ²ßÂÔ18
\bibitem{Clark1980book}
C. W. Clark,  \emph{Restricted access to common-property fishery resources: a game-theoretic analysis}, Dynamic optimization and mathematical economics, Springer, Boston, 1980.

\bibitem{Szolnoki2013SR}
A.~Szolnoki, and  M.~Perc,  \emph{Seasonal payoff variations and the evolution of cooperation in social dilemmas},  Sci. Rep., vol.  9, pp. 12575, 2013.



%%%ÂÖ¸ûÖÖ19
 \bibitem{Karlen1994AA}
D. L. Karlen, G. E. Varvel, D. G. Bullock,  and  R. M. Cruse,  \emph{ Crop rotations for the 21st century}, Adv. Agron., vol. 53, pp. 1-45, 1994.





 \bibitem{Wang2020TNSE}
 W. Wang and X. Li,  \emph{Temporal stable community in time-varying networks}, IEEE Trans. Netw. Sci. Eng., vol. 7, pp. 1508-1520, 2020.

% \bibitem{Chen2021TNSE}
% C. Chen, X. Chen, J. Yu, W. Wu, and D. Wu,  \emph{Impact of temporary fork on the evolution of mining pools in blockchain networks: An evolutionary game analysis}, IEEE Trans. Netw. Sci. Eng., vol. 8, pp. 400-418, 2021.



%%23
\bibitem{Liberzon2005BB}
D.~Liberzon,  \emph{Switched systems}, Handbook of networked and embedded control systems, Birkh\"{a}user Boston, 2005.

%%24
 \bibitem{Cunha2021ECC}
R. Cunha, L. Zino, and M.  Cao,  \emph{On imitation dynamics in population games with Markov switching}, European Control Conference (ECC), pp. 722-727, 2021.




%%21
 \bibitem{Hilbe2018Nature}
C. Hilbe, \v{S}. \v{S}imsa,  K. Chatterjee,  and M. A. Nowak,  \emph{Evolution of cooperation in stochastic games}, Nature, vol. 559(7713), pp.  246-249, 2018.




%%%22
\bibitem{Su19PNAS}
Q. Su, A. McAvoy, L. Wang, and  M. A. Nowak,  \emph{Evolutionary dynamics with game transitions}, Proc. Natl. Acad. Sci. U.S.A., vol. 116, pp. 25398-25404, 2019.

\bibitem{ShuPRSA2022}
L. Shu, and F.  Fu, \emph{Eco-Evolutionary Dynamics of Bimatrix Games}, Proc. Roy. Soc. A, vol. 478, pp. 20220567, 2022.


\bibitem{LiNC2020}
A.~Li,  L.~Zhou, Q.~Su, S.~P.~Cornelius, Y.-Y. Liu,  L. Wang, and S. A.  Levin,  \emph{Evolution of cooperation on temporal networks}, Nat. Commun., vol. 11, pp. 1-9, 2020.



\bibitem{Barab12NS}
A.~L.~Barab\'{a}si,  \emph{Network science}, New York, NY: Cambridge University Press, 2016.





%Evolutionary Game Dynamics Based on Local Intervention in Multi-Agent Systems
%Zhu, Y., Zhang, J., Han, J., & Chen, Z. (2020). Evolutionary Game Dynamics Based on Local Intervention in Multi-Agent Systems. IEEE Transactions on Circuits and Systems II: Express Briefs, 68(4), 1293-1297.


%%%replicator dynamics11
 %\bibitem{Schuster1983JTB}
%P. Schuster, and K. Sigmund,  \emph{Replicator dynamics}, J. Theor. Biol., vol. 100, pp. 533-538, 1983.

%%%replicator dynamics12
 %\bibitem{Hofbauer2003BAMS}
%J. Hofbauer, and K. Sigmund,  \emph{Evolutionary game dynamics}, Bull. Am. Math. Soc., vol. 40, pp. 479-519, 2003.

%%13
%\bibitem{Hofbauer1998}
%J. Hofbauer, and  K. Sigmund,  \emph{Evolutionary Games and Population Dynamics}, Cambridge, UK: Cambridge University Press, 1998.


%\bibitem{Zhang2019IEEETCS}
%J.~Zhang, and M.~Cao,  \emph{Strategy competition dynamics of multi-agent systems in the framework of evolutionary game theory},  IEEE Trans. Circuits Syst. II: Express Briefs, vol.  67(1), pp. 152--156, 2019.


%\bibitem{Erd1959PMD}
%P.~Erd\H{o}s and A.~R\'{e}nyi, \emph{On random graphs I}, Publ. Math. Debrecen, vol. 6, pp. 290-297, 1959.


%\bibitem{Watts1998Nature}
%D.~J. Watts, and  S.~H. Strogatz, \emph{Collective dynamics of `small-world' networks}, Nature, vol. 393, pp. 440-442, 1998.


%\bibitem{Barab1999Science}
%A.~L. Barab\'{a}si and R. Albert,  \emph{Emergence of scaling in random networks}, Science, vol. 286, pp. 509-512, 1999.



%\bibitem{Knight2017CPP}
%Z. G. Knight,  \emph{A proposed model of psychodynamic psychotherapy linked to Erik Erikson's eight stages of psychosocial development}, Clin. Psychol.  Psychother., vol. 24, pp. 1047--1058, 2017.

%\bibitem{Miller2006JTA}
%W. R. Miller,  and T. B. Moyers,  \emph{Eight stages in learning motivational interviewing}, J. Teach. Addict., vol. 5, pp. 3--17, 2006.









%\bibitem{Jones1995DS}
%C.~K.~R.~T.~Jones,  \emph{Geometric singular perturbation theory}, Dynamical systems, Springer, Berlin, Heidelberg, 1995: 44-118.



%\bibitem{Yang14NAH}
%H. Yang, B. Jiang, and V. Cocquempot,  \emph{A survey of results and perspectives on stabilization of switched nonlinear systems with unstable modes}, Nonlinear Anal.-Hybri., vol. 13, pp. 45-60, 2014.


\bibitem{Khalil2002}
H.K.~Khalil,  \emph{Nonlinear Systems}, Printice-Hall, 2002.

\bibitem{Jones1995DS}
C.~K.~R.~T.~Jones,  \emph{Geometric singular perturbation theory}, Dynamical systems, Springer, Berlin, Heidelberg, 1995: 44-118.





\bibitem{Doebeli05EL}
M.~Doebeli, and C.~Hauert,  \emph{Models of cooperation based on the Prisoner's Dilemma and the Snowdrift game}, Ecol. Lett., vol. 8, pp. 748-766, 2005.

\bibitem{Skyrms04}
B.~Skyrms,  \emph{The stag hunt and the evolution of social structure}, Cambridge, UK: Cambridge University Press, 2004.

\bibitem{Huo17RSOS}
X.~Huo, and F.~Fu,  \emph{Risk-aware multi-armed bandit problem with application to portfolio selection}, Royal Soc. Open Sci., vol. 4(11), pp. 171377, 2017.

\bibitem{Hu21Games}
K.~Hu, and F.~Fu,  \emph{Evolutionary dynamics of gig economy labor strategies under technology, policy and market influence}, Games, vol. 12(2), pp. 49, 2021.

\bibitem{Wang20PRSA}
X.~Wang, Z.~Zheng, and F.~Fu,  \emph{Steering eco-evolutionary game dynamics with manifold control}, Proc. R. Soc. A, vol. 476(2233), pp. 20190643, 2020.

\bibitem{Watts98nature}
D.~J.~Watts, and S.~H.~Strogatz,  \emph{Collective dynamics of `small-world' networks}, Nature, vol. 393, pp. 440-442, 1998.

\end{thebibliography}
\end{document}